\newcommand{\tn}{\textnormal}
\newcommand{\CC}{\mathcal{C}}
\newcommand{\WW}{\mathcal{W}}
\newcommand{\KK}{\mathcal{K}}
\newcommand{\se}{\subseteq}
\newtheorem{thm}{Theorem}[section]
\newtheorem{cor}[thm]{Corollary}
\newtheorem{lem}[thm]{Lemma}
\newtheorem{prop}[thm]{Proposition}
\newtheorem{exam}[thm]{Example}
\newtheorem{rem}[thm]{Remark}
\numberwithin{equation}{section}
\begin{document}
\bibliographystyle{amsplain}


\author{Amir Mousivand}
\address{Amir Mousivand\\Department of Mathematics, Islamic Azad
University, Firoozkooh branch, Firoozkooh, Iran.}\email{amirmousivand@gmail.com\\amir.mousivand@iaufb.ac.ir}


\keywords{Betti number, Castelnuovo-Mumford regularity, pairwise 3-disjoint edges, $h$-vector, Hilbert series}

\subjclass[2000]{13H10, 05C75}
\title{Algebraic properties of product of graphs}

\begin{abstract} Let $G$ and $H$ be two simple graphs and let $G*H$ denotes the graph theoretical product of $G$ by $H$. In this paper we provide some results on graded Betti numbers, Castelnuovo-Mumford regularity, projective dimension, $h$-vector, and Hilbert series of $G*H$ in terms of that information of $G$ and $H$. To do this, we will provide explicit formulae to compute graded Betti numbers, $h$-vector, and Hilbert series of disjoint union of complexes. Also we will prove that the family of graphs whose regularity equal the maximum number of pairwise $3$-disjoint edges, is closed under product of graphs.
\end{abstract}

\maketitle

\section{Introduction}
To any finite simple graph $G$ with the vertex set $V(G)=\{x_1,...,x_n\}$ and the edge set $E(G)$, one can attach an ideal in the polynomial rings $R=\mathbb{K}[x_1,...,x_n]$ over the field $\mathbb{K}$, whose generators are square-free quadratic monomials $x_ix_j$ such that $\{x_i,x_j\}$ is an edge of $G$. This ideal is called the {\it edge ideal} of $G$ and will be denoted by $I(G)$. Also the {\it edge ring} of $G$, denoted by $\mathbb{K}[G]$ is defined to be the quotient ring $\mathbb{K}[G]=R/I(G)$. Edge ideals and edge rings were first introduced by Villarreal \cite{V1} and then they have been studied by many authors in order to examine their algebraic properties according to the combinatorial data of graphs. The most important Algebraic objects among these are Betti numbers, projective dimension, (Castelnuovo-Mumford) regularity, $h$-vector and Hilbert series. The aim of this paper is to investigate the above mentioned algebraic properties of product of graphs according to the information of the original graphs.

For any two finite simple graphs $G=(V(G),E(G))$ and $H=(V(H),E(H))$ over disjoint vertex sets (i.e. $V(G)\cap V(H)=\emptyset$), the graph theoretical {\it product of $G$ by $H$}, denoted by $G*H$, is the graph over the vertex set $V(G)\cup V(H)$ whose edge set is
$$E(G*H)=E(G)\cup E(H)\cup\{\{x,y\}~|~x\in V(G) ~ \tn{and} ~ y\in V(H)\}.$$

Our first topic in this paper is about the regularity of product. The regularity is one of the most important invariants of a graded module so that the regularity of edge rings has been a subject of study in combinatorial commutative algebra (see for example, \cite{Z,FHVT,VT,Ku,MMCRTY}). The (Castelnuovo-Mumford) regularity of a graded $R$-module M, denoted by $\tn{reg}(M)$, is defined by
$$\tn{reg}(M) = \max\{j-i~|~\beta_{i,j}(M)\neq 0 \}.$$
Whieldon in \cite{W} determined the graded Betti numbers of the edge ring $\mathbb{K}[G*H]$ (see \cite[Lemma 5.4]{W}). Using this we will reprove known results on the Betti numbers of some families of graphs such as complete bipartite graphs, wheels, and star graphs. In addition, we will prove that the regularity of product is maximum of the regularity of the original graphs. Namely:

\begin{prop} Let $G$ and $H$ be two simple graphs with disjoint vertex sets. Then
$$\tn{reg}(R/I(G*H))=\tn{max}\{\tn{reg}(R/I(G)),\tn{reg}(R/I(H))\}.$$
\end{prop}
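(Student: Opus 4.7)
The plan is to apply Hochster's formula to the independence complex of $G*H$, combined with the following structural observation: since every vertex of $G$ is adjacent in $G*H$ to every vertex of $H$, any independent set of $G*H$ lies entirely in $V(G)$ or entirely in $V(H)$. Therefore, as an abstract simplicial complex on the disjoint vertex set $V(G)\cup V(H)$, the independence complex decomposes as a disjoint union $\Delta(G*H)=\Delta(G)\sqcup\Delta(H)$, and this decomposition is preserved by restriction: for every $W\se V(G)\cup V(H)$ one has $\Delta(G*H)_W=\Delta(G)_{W_G}\sqcup\Delta(H)_{W_H}$, where $W_G=W\cap V(G)$ and $W_H=W\cap V(H)$.

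Next I would combine this with the standard splitting of reduced simplicial homology under disjoint unions: for $k\geq 1$, $\tilde{H}_k(A\sqcup B)\cong \tilde{H}_k(A)\oplus \tilde{H}_k(B)$, and for $k=0$ with both $A,B$ non-empty, $\tilde{H}_0(A\sqcup B)\cong \tilde{H}_0(A)\oplus \tilde{H}_0(B)\oplus\mathbb{K}$ (the extra summand records the new connected component). Substituting into Hochster's formula
$$\beta_{i,j}(R/I(G*H))=\sum_{|W|=j}\dim_{\mathbb{K}}\tilde{H}_{j-i-1}(\Delta(G*H)_W;\mathbb{K}),$$
any non-vanishing $\beta_{i,j}(R/I(G*H))$ with $j-i=k+1\geq 2$ forces some $\tilde{H}_k(\Delta(G)_{W_G})\neq 0$ or $\tilde{H}_k(\Delta(H)_{W_H})\neq 0$, which by Hochster applied to $R/I(G)$ or $R/I(H)$ reflects a non-vanishing Betti number of $R/I(G)$ or $R/I(H)$ with the same value of $j-i$. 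Conversely, any such non-vanishing homology for $G$ alone lifts (via $W=W_G$) to a non-vanishing contribution for $G*H$. The only additional source of non-vanishing Betti numbers comes from subsets $W$ meeting both $V(G)$ and $V(H)$, whose extra $\tilde{H}_0\cong\mathbb{K}$ contribution is confined to the linear strand $j-i=1$.

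Putting these observations together, the set of values $j-i$ attained by non-zero Betti numbers of $R/I(G*H)$ equals the union of the analogous sets for $R/I(G)$ and $R/I(H)$, possibly augmented by $\{1\}$. Since regularity is by definition $\max\{j-i:\beta_{i,j}\neq 0\}$, this gives
$$\tn{reg}(R/I(G*H))=\max\{\tn{reg}(R/I(G)),\,\tn{reg}(R/I(H)),\,1\}.$$
Whenever at least one of $G$ or $H$ contains an edge, the corresponding regularity is already $\geq 1$ and the auxiliary $1$ is absorbed into the maximum, producing the formula stated in the proposition.

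The main obstacle, and the step that requires real care, is isolating the effect of the "straddling" subsets $W$ with both $W_G,W_H\neq\emptyset$: one must verify that the only genuinely new information they introduce (beyond what is already seen on each side individually) is the extra $\tilde{H}_0$ class, and hence that this contribution lives purely on the line $j-i=1$. Equivalently, invoking Whieldon's Lemma 5.4 directly, one must check that the cross terms appearing there are supported on the linear strand and therefore cannot push the regularity above $\max\{\tn{reg}(R/I(G)),\tn{reg}(R/I(H))\}$ as soon as either graph carries at least one edge.
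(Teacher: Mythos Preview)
Your argument is correct and follows essentially the same route as the paper: both proofs rest on the decomposition $\Delta_{G*H}=\Delta_G\cup\Delta_H$ (the paper's Lemma~\ref{lem:disj}) and on the observation that the ``cross'' contributions coming from subsets meeting both vertex sets live only on the linear strand $j-i=1$, so they cannot raise the regularity above $\max\{\tn{reg}(R/I(G)),\tn{reg}(R/I(H))\}$. The only cosmetic difference is that you unpack Hochster's formula and the homology of a disjoint union directly, whereas the paper quotes the already-packaged Betti number formula (Corollary~\ref{cor:prod}, i.e.\ Whieldon's Lemma~5.4) and reads off the same conclusion; your explicit caveat about the degenerate case where neither graph has an edge is a welcome addition that the paper leaves implicit.
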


One of the useful invariant of a graph that relates to the regularity is the number of pairwise 3-disjoint edges of $G$.
Two edges $\{x, y\}$ and $\{u, v\}$ of a graph $G$ is called {\it 3-disjoint} if the induced
subgraph of $G$ on $\{x,y,u,v\}$ is disconnected. A set $\Gamma$ of edges of $G$ is called a {\it pairwise
3-disjoint set of edges} if any pair of  edges of $\Gamma$ is 3-disjoint.The maximum cardinality of all pairwise 3-disjoint
sets of edges in $G$ is denoted by $a(G)$.

Katzman provided the following lower bound of the regularity of the edge ring.
\begin{thm} \cite[Lemma 2.2]{Ka} \label{regT} For any graph $G$, $\tn{reg}(R/I(G))\geq
a(G)$.
\end{thm}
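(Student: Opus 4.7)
The plan is to apply Hochster's formula to the independence complex of $G$. Write $\Delta=\Delta(G)$ for the simplicial complex whose faces are the independent sets of $G$; then $I(G)$ is the Stanley--Reisner ideal of $\Delta$, and Hochster's formula reads
$$\beta_{i,j}(R/I(G)) \;=\; \sum_{\substack{W\subseteq V(G)\\ |W|=j}} \dim_{\mathbb{K}} \widetilde{H}_{\,j-i-1}(\Delta_W;\mathbb{K}),$$
where $\Delta_W$ is the subcomplex of $\Delta$ induced on $W$. To deduce $\tn{reg}(R/I(G))\geq a(G)$ it suffices to exhibit a single pair $(i,j)$ with $j-i=a(G)$ for which the right-hand side is nonzero.

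Set $t:=a(G)$ and fix a pairwise $3$-disjoint set of edges $e_1,\ldots,e_t$. First I would observe that any two of them are automatically vertex-disjoint: if $e_i$ and $e_j$ shared a vertex, the induced subgraph on $e_i\cup e_j$ would be a path on three vertices, which is connected, contradicting $3$-disjointness. Letting $W=e_1\cup\cdots\cup e_t$, one therefore has $|W|=2t$, and the $3$-disjointness hypothesis (no edge of $G$ between the endpoints of distinct $e_i$) forces the induced subgraph $G[W]$ to be exactly the perfect matching $e_1\sqcup\cdots\sqcup e_t$.

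Next I would identify $\Delta_W$ with the independence complex of $G[W]$. Using the standard fact that the independence complex of a disjoint union of graphs is the join of the independence complexes of the summands, $\Delta_W$ is the $t$-fold join of the independence complex of $K_2$, i.e.\ of the $0$-sphere $S^0$. Since a $t$-fold join of copies of $S^0$ is homotopy equivalent to $S^{t-1}$,
$$\widetilde{H}_{t-1}(\Delta_W;\mathbb{K})\;\cong\;\mathbb{K}.$$
Plugging $j=|W|=2t$ and $j-i-1=t-1$, that is $i=t$, into Hochster's formula gives $\beta_{t,2t}(R/I(G))\geq 1$, and hence $\tn{reg}(R/I(G))\geq 2t-t=t=a(G)$.

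The only delicate point is the sphere/join identification, which relies crucially on $G[W]$ being an \emph{induced} subgraph; this is exactly what the $3$-disjoint hypothesis supplies, so once the right $W$ is chosen the step is essentially forced. Everything else is a bookkeeping application of Hochster's formula, so I do not anticipate any real obstacle beyond correctly matching the homological indices.
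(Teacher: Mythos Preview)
Your argument is correct: choosing $W$ to be the vertex set of a maximum pairwise $3$-disjoint matching, the induced subgraph $G[W]$ is a perfect matching on $t$ edges, its independence complex is the $t$-fold join of $S^0$, hence a $(t-1)$-sphere, and Hochster's formula gives $\beta_{t,2t}(R/I(G))\geq 1$, so $\tn{reg}(R/I(G))\geq t=a(G)$. The index bookkeeping and the observation that $3$-disjoint edges are automatically vertex-disjoint are both handled cleanly.

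However, there is nothing to compare against: the paper does not supply its own proof of this statement. Theorem~\ref{regT} is quoted from Katzman \cite[Lemma~2.2]{Ka} and used as a black box (for instance in the proof of Corollary~\ref{eqp}). Your write-up is in fact the standard proof of Katzman's lemma, so it is entirely appropriate as a self-contained justification, but it goes beyond what the present paper attempts.
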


The following natural question arises: are there any families of graphs where this inequality is an equality? There has been several attempts to answer this question.
Zheng \cite{Z} proved the equality for trees.
Francisco, H\`a and Van Tuyl \cite{FHVT} proved equality holds
for Cohen-Macaulay bipartite graphs. Van Tuyl \cite{VT} generalized this to the family of sequentially Cohen-Macaulay bipartite graphs. Note that a tree is a sequentially Cohen-Macaulay bipartite graph (\cite{F}).
Kummini \cite{Ku} proved equality holds also for unmixed bipartite graphs.
In addition, the authors in \cite{MMCRTY} generalized Kummini's  result to the class of very well-covered graphs.

Let $\mathcal{A}$ be the set of all graphs whose regularity equal the maximum number of pairwise 3-disjoint edges, i.e.
$$\mathcal{A}=\{~G~ ~ ~|~ ~ ~ G ~\tn{is a simple graph with } \tn{reg}(R/I(G))=a(G)\}.$$
Therefore $\mathcal{A}$ contains the above mentioned classes of graphs. We prove $\mathcal{A}$ is closed under product of graphs. Namely:

\begin{prop} Let $\mathcal{A}$ be the set of all graphs $G$ with the property $\tn{reg}(R/I(G))=a(G)$. Then $\mathcal{A}$ is closed under product of graphs, i.e. for any $G,H\in \mathcal{A}$ one has
$$\tn{reg}(R/I(G*H))=a(G*H).$$
\end{prop}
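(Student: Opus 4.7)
The plan is to combine the previous proposition with the hypothesis $G, H \in \mathcal{A}$ to reduce the statement to a purely combinatorial identity about $3$-disjoint edges of the product. By the previous proposition, $\tn{reg}(R/I(G*H)) = \tn{max}\{\tn{reg}(R/I(G)), \tn{reg}(R/I(H))\}$, and since $G, H \in \mathcal{A}$ this equals $\tn{max}\{a(G), a(H)\}$. Hence it suffices to prove the combinatorial identity $a(G*H) = \tn{max}\{a(G), a(H)\}$, at least in the main case where both $G$ and $H$ carry an edge.

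For the inequality $a(G*H) \geq \tn{max}\{a(G), a(H)\}$, I would observe that for any four vertices $\{x,y,u,v\} \se V(G)$ the induced subgraph of $G*H$ on $\{x,y,u,v\}$ coincides with that of $G$, since every cross edge of $G*H$ uses a vertex in $V(H)$. Hence a pairwise $3$-disjoint set of edges of $G$ remains pairwise $3$-disjoint in $G*H$; the symmetric statement holds for $H$.

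For the reverse inequality I would run a short case analysis: any two $3$-disjoint edges of $G*H$ must either both lie in $E(G)$ or both lie in $E(H)$. Labelling each edge of $G*H$ as \emph{$G$-type}, \emph{$H$-type}, or \emph{cross}, the mixed possibilities all violate $3$-disjointness by the universal adjacency between $V(G)$ and $V(H)$: two cross edges $\{x_1,y_1\},\{x_2,y_2\}$ induce the connected $K_{2,2}$ of cross edges; a $G$-edge $\{x_1,x_2\}$ together with a cross edge $\{x_3,y\}$ is connected through $\{x_1,y\}$ and $\{x_2,y\}$; and a $G$-edge together with an $H$-edge is joined by all four cross edges linking their endpoints. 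Therefore any pairwise $3$-disjoint family of size $\geq 2$ in $G*H$ lies entirely in $E(G)$ or entirely in $E(H)$, yielding $a(G*H) \leq \tn{max}\{a(G),a(H)\}$.

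The main obstacle is modest, essentially cosmetic: the case analysis just outlined, together with a quick direct check of the degenerate situation in which $G$ or $H$ has no edges (where $G*H$ specializes to a cone, or to the complete bipartite graph $K_{|V(G)|,|V(H)|}$), where the desired equality $\tn{reg}(R/I(G*H)) = a(G*H)$ can be verified by hand.
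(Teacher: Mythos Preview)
Your proof is correct and follows essentially the same route as the paper: the paper records the combinatorial identity $a(G*H)=\max\{a(G),a(H)\}$ as a separate lemma (proved with the same observation that a pairwise $3$-disjoint set in $G*H$ must lie entirely in $E(G)$ or $E(H)$), and then combines it with Proposition~\ref{reg} exactly as you do. Your case analysis is slightly more explicit, and your attention to the degenerate edge-free case is a point the paper does not address.
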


Our second topic is the $h$-vector and Hilbert series of product. $h$-vectors of simplicial complexes has actively been studied in both viewpoint  of combinatorics and combinatorial commutative algebra so that $h$-vectors of some families of simplicial complexes have been completely characterized (see \cite{S2}). A classical result of Stanley guarantees that $h$-vectors of Cohen–Macaulay complexes are non-negative (see \cite{S1,BF,BR,MNZ,MT} for more results on $h$-vector).

Let $\Delta$ be a simplicial complex over a finite set of vertices. One of the fundamental invariants of $\Delta$ is its $f$-vector, $f(\Delta)=(f_{-1},f_0, \ldots , f_d)$, where each $f_i$ is the number of faces of $\Delta$ with dimension $i$ and $d=\tn{dim}(\Delta)$. More algebraically, another important invariant of $\Delta$ is the $h$-vector, $h(\Delta)=(h_0,h_1,\ldots,h_{d+1})$, which can be found from the Hilbert series of the Stanley-Reisner ring of $\Delta$, i.e.
$$\sum_{i=0}^{d+1}h_it^i=\sum_{i=0}^{d+1}f_{i-1}t^i(1-t)^{d+1-i}.$$
The two sequences are interchangeable via the equations
$$h_k=\sum_{i=0}^k(-1)^{k-i}\binom{d+1-i}{k-i}f_{i-1},$$
and
$$f_{k-1}=\sum_{i=0}^k\binom{d+1-i}{k-i}h_i.$$

In Section 4 we will express the $h$-vector and Hilbert series of disjoint union of complexes in terms of $h$-vector and Hilbert series of the original complexes, respectively. Indeed, we prove the following two propositions.

\begin{prop} Let $\Delta$ and $\Delta'$ be two simplicial complexes with disjoint vertex sets. Also let $\tn{dim}(\Delta')\leq \tn{dim}(\Delta)$, and assume $n=\tn{dim}(\Delta)-\tn{dim}(\Delta')$. Then
$$h_k(\Delta\cup\Delta')=h_k(\Delta)+\sum_{p=0}^{n}(-1)^p\binom{n}{p}h_{k-p}(\Delta')-(-1)^k\binom{d+1}{k},$$
for all $0\leq k \leq d+1$, where $d=\tn{dim}(\Delta)$.
\end{prop}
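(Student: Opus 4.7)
The plan is to pass to generating functions via the defining identity
$$\sum_{k=0}^{d+1}h_k(\Delta)t^k=\sum_{i=0}^{d+1}f_{i-1}(\Delta)t^i(1-t)^{d+1-i},$$
and likewise for $\Delta'$ (with $d$ replaced by $d':=\tn{dim}(\Delta')=d-n$) and for $\Delta\cup\Delta'$ (which still has dimension $d$). The only combinatorial inputs are the elementary $f$-vector identities $f_i(\Delta\cup\Delta')=f_i(\Delta)+f_i(\Delta')$ for $i\geq 0$ together with $f_{-1}(\Delta\cup\Delta')=f_{-1}(\Delta)=f_{-1}(\Delta')=1$ (the empty face is shared rather than duplicated). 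The asymmetry between these two cases is exactly what will produce the correction term $-(-1)^k\binom{d+1}{k}$.

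I would substitute these $f$-vector identities into the defining sum for the $h$-polynomial of $\Delta\cup\Delta'$, peel off the $i=0$ term $(1-t)^{d+1}$, and then add and subtract a copy of $(1-t)^{d+1}$ inside each of the two resulting sums so that the defining expression of $P_\Delta(t):=\sum_{k}h_k(\Delta)t^k$ is recovered on the $\Delta$-side. On the $\Delta'$-side, the sum $\sum_{i=0}^{d+1}f_{i-1}(\Delta')t^i(1-t)^{d+1-i}$ has $(1-t)$-exponents shifted by $n$ relative to the defining expression of $P_{\Delta'}(t)$, and since $f_{i-1}(\Delta')=0$ for $i>d'+1$, the sum telescopes to $(1-t)^n P_{\Delta'}(t)$. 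Combining everything yields the compact identity
$$P_{\Delta\cup\Delta'}(t)=P_\Delta(t)+(1-t)^n P_{\Delta'}(t)-(1-t)^{d+1},$$
from which the desired formula is immediate upon expanding $(1-t)^n=\sum_{p=0}^n(-1)^p\binom{n}{p}t^p$ and $(1-t)^{d+1}=\sum_{k=0}^{d+1}(-1)^k\binom{d+1}{k}t^k$ and then reading off the coefficient of $t^k$.

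The main (and only) obstacle is really just the bookkeeping around the shared empty face and the dimension shift of $n$; once these are handled carefully, the rest is symbolic manipulation with the binomial theorem. A pleasant side benefit is that dividing the boxed identity above by $(1-t)^{d+1}$ produces the analogous Hilbert series statement for free, so this same approach should also dispatch the companion proposition for Hilbert series that follows in the same section.
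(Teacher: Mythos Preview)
Your proof is correct and takes a genuinely different, more streamlined route than the paper. The paper builds up to this proposition through a chain of auxiliary results: it first proves the equal-dimension case (Lemma~\ref{eqdim}) via the $f$-vector, then computes $h(\tn{cone}(\Delta'))$ (Lemma~\ref{cone}) and $h(\Delta'*[n])$ (Corollary~\ref{*[n]}), then proves a binomial identity (Lemma~\ref{sum}), and finally assembles everything by comparing $h_k(\Delta\cup\Delta')$ with $h_k(\Delta\cup(\Delta'*[n]))$ through a somewhat intricate double-sum manipulation. Your approach bypasses all of this scaffolding: by working directly with the $h$-polynomials $P_\Delta(t)=\sum_k h_k(\Delta)t^k$ and using only the elementary $f$-vector relation $f_i(\Delta\cup\Delta')=f_i(\Delta)+f_i(\Delta')$ for $i\geq 0$ (together with the shared empty face), you obtain the clean identity $P_{\Delta\cup\Delta'}(t)=P_\Delta(t)+(1-t)^nP_{\Delta'}(t)-(1-t)^{d+1}$, from which both the $h$-vector formula and the Hilbert series formula (Proposition~\ref{Hilbert}) drop out by the binomial theorem. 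What the paper's approach buys is a self-contained treatment that also records the $h$-vector of cones and joins with simplices as standalone facts; what your approach buys is a single short argument that handles both propositions at once and makes the structure of the correction terms transparent.
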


\begin{prop} Let $\Delta$ be a simplicial complex and let $\Delta_1,\ldots,\Delta_r$ be connected components of $\Delta$. Then
$$H_{\mathbb{K}[\Delta]}(t)=\sum_{j=1}^rH_{\mathbb{K}[\Delta_j]}(t)-(r-1).$$
\end{prop}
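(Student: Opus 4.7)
The plan is to compute $H_{\mathbb{K}[\Delta]}(t)$ directly from the face-wise expansion of the Hilbert series of a Stanley--Reisner ring. A $\mathbb{K}$-basis for $\mathbb{K}[\Delta]$ consists of those monomials in the variables indexing $V(\Delta)$ whose support is a face of $\Delta$; grouping these monomials by their support yields
$$H_{\mathbb{K}[\Delta]}(t)=\sum_{F\in\Delta}\left(\frac{t}{1-t}\right)^{|F|},$$
in which the empty face contributes the summand $1$. The identical formula applies to each $\mathbb{K}[\Delta_j]$ in its own polynomial ring over the vertex set $V(\Delta_j)$.

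Next I would exploit the disjoint-union structure. Since $\Delta_1,\ldots,\Delta_r$ are the connected components of $\Delta$, their vertex sets are pairwise disjoint; consequently every nonempty face of $\Delta$ lies in the vertex set of a unique $\Delta_j$, while the empty face belongs to all of them. Splitting the face sum for $\Delta$ along the components then gives
$$H_{\mathbb{K}[\Delta]}(t)=1+\sum_{j=1}^{r}\sum_{\emptyset\neq F\in\Delta_j}\left(\frac{t}{1-t}\right)^{|F|}=1+\sum_{j=1}^{r}\bigl(H_{\mathbb{K}[\Delta_j]}(t)-1\bigr),$$
which rearranges to the stated identity.

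There is essentially no obstacle beyond bookkeeping: the empty face is counted once on the left but $r$ times in the naive sum $\sum_j H_{\mathbb{K}[\Delta_j]}(t)$, which is exactly what the correction term $-(r-1)$ compensates for. As a sanity check, for $\Delta$ consisting of two isolated vertices $\mathbb{K}[\Delta]=\mathbb{K}[a,b]/(ab)$ has Hilbert series $(1+t)/(1-t)$, while each component contributes $1/(1-t)$, and indeed $2/(1-t)-1=(1+t)/(1-t)$.
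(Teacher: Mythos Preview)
Your proof is correct and considerably more direct than the paper's. The paper first establishes an $h$-vector formula for the disjoint union $\Delta\cup\Delta'$ (Proposition~\ref{neqdim}), which in turn rests on a chain of lemmas about cones and binomial identities; from that $h$-vector identity it derives the two-component Hilbert series relation $H_{\mathbb{K}[\Delta\cup\Delta']}(t)=H_{\mathbb{K}[\Delta]}(t)+H_{\mathbb{K}[\Delta']}(t)-1$ (Proposition~\ref{Hilbert}) and then inducts on $r$. You instead invoke the standard face-wise expansion $H_{\mathbb{K}[\Delta]}(t)=\sum_{F\in\Delta}(t/(1-t))^{|F|}$ and observe that nonempty faces partition among the components while the empty face is overcounted $r-1$ times. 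Your route bypasses all the $h$-vector machinery and handles arbitrary $r$ in one stroke; the paper's route, while longer for this particular statement, yields the finer $h$-vector information of Proposition~\ref{neqdim} along the way, which is of independent interest there.
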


Applying the above result to the independence complex of graphs enables us to provide the following results on the Hilbert series of product of some families of graphs.

\begin{cor} Let $G$ be a simple graph and $\KK_m$ be the complete graph over $m$ vertices. Then
$$H_{\mathbb{K}[G*\KK_m]}(t)=H_{\mathbb{K}[G]}(t)+m\frac{t}{1-t}.$$
\end{cor}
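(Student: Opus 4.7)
The plan is to realize $\mathbb{K}[G*\KK_m]$ as the Stanley-Reisner ring of the independence complex $\Delta(G*\KK_m)$, observe that this complex decomposes as a disjoint union, and then apply the preceding proposition on Hilbert series of disjoint unions of complexes.

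The key combinatorial observation is that in $G*\KK_m$ every vertex of $G$ is adjacent to every vertex of $\KK_m$, so no independent set can meet both $V(G)$ and $V(\KK_m)$. Hence
\[
\Delta(G*\KK_m) \;=\; \Delta(G)\;\sqcup\;\Delta(\KK_m),
\]
and since the only independent sets of $\KK_m$ are the empty set and the singletons $\{v_1\},\ldots,\{v_m\}$, the second piece is just $m$ isolated vertices. For each isolated vertex the Stanley-Reisner ring is a univariate polynomial ring in one variable, with Hilbert series $1/(1-t)$.

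I would then let $\Delta_1,\ldots,\Delta_s$ denote the connected components of $\Delta(G)$ and apply the disjoint-union proposition twice. For $\Delta(G)$ itself it gives
\[
H_{\mathbb{K}[G]}(t) \;=\; \sum_{i=1}^{s} H_{\mathbb{K}[\Delta_i]}(t)-(s-1),
\]
while for $\Delta(G*\KK_m)$, whose $s+m$ connected components are $\Delta_1,\ldots,\Delta_s,\{v_1\},\ldots,\{v_m\}$, it gives
\[
H_{\mathbb{K}[G*\KK_m]}(t)\;=\;\sum_{i=1}^{s} H_{\mathbb{K}[\Delta_i]}(t)+\frac{m}{1-t}-(s+m-1).
\]
Subtracting the first identity from the second eliminates the unknown sum and yields
\[
H_{\mathbb{K}[G*\KK_m]}(t)-H_{\mathbb{K}[G]}(t) \;=\; \frac{m}{1-t}-m \;=\; \frac{mt}{1-t},
\]
which is the stated formula.

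I do not anticipate any real obstacle. The entire content lies in the decomposition $\Delta(G*\KK_m)=\Delta(G)\sqcup\Delta(\KK_m)$ of the independence complex (which in turn uses only the fact that every vertex of $G$ is joined to every vertex of $\KK_m$), after which the conclusion is essentially a substitution into the disjoint-union Hilbert series formula.
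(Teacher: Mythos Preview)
Your proof is correct and follows essentially the same approach as the paper: both rest on the decomposition $\Delta_{G*\KK_m}=\Delta_G\cup\Delta_{\KK_m}$ and the disjoint-union Hilbert series formula. The paper is slightly more direct: instead of breaking both complexes into connected components and subtracting, it applies the two-term version $H_{\mathbb{K}[G*\KK_m]}(t)=H_{\mathbb{K}[G]}(t)+H_{\mathbb{K}[\KK_m]}(t)-1$ (Corollary~\ref{Hilbert:graph} with $r=2$) and computes $H_{\mathbb{K}[\KK_m]}(t)=\dfrac{1+(m-1)t}{1-t}$ from the $h$-vector $h(\KK_m)=(1,m-1)$, so that $\dfrac{1+(m-1)t}{1-t}-1=\dfrac{mt}{1-t}$ drops out immediately. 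Your detour through the connected components of $\Delta_G$ is unnecessary but harmless.
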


\begin{cor} Let $G$ be a simple graph and let $\overline{\KK}_m$ denotes the complement of $\KK_m$. If $m\leq \tn{dim}(\Delta)+1$,  then
$$H_{\mathbb{K}[G*\overline{\KK}_m]}(t)=H_{\mathbb{K}[G]}(t)+\frac{1}{(1-t)^m}-1.$$
\end{cor}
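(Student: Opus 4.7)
The plan is to recognize the independence complex $\Delta(G*\overline{\KK}_m)$ as a vertex-disjoint union of two subcomplexes and then invoke the preceding proposition on Hilbert series. In the product $G*\overline{\KK}_m$, every vertex of $\overline{\KK}_m$ is adjacent to every vertex of $G$, so no independent set can contain vertices from both parts. Consequently each face of $\Delta(G*\overline{\KK}_m)$ lies either entirely in $V(G)$, where it is forced to be a face of $\Delta(G)$, or entirely in $V(\overline{\KK}_m)$, where, because $\overline{\KK}_m$ has no edges, every subset is a face. The second piece is therefore the full $(m-1)$-simplex on $\{y_1,\ldots,y_m\}$, whose Stanley--Reisner ring is the polynomial ring $\mathbb{K}[y_1,\ldots,y_m]$ with Hilbert series $1/(1-t)^m$.

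With the decomposition $\Delta(G*\overline{\KK}_m)=\Delta(G)\sqcup\Delta(\overline{\KK}_m)$ in hand, I would apply the previous proposition. Although it is stated for the full list of connected components of a single complex, one can extract from it the two-summand additivity rule: whenever $\Delta=\Delta'\sqcup\Delta''$ is a vertex-disjoint union of subcomplexes, one has
$$H_{\mathbb{K}[\Delta]}(t)=H_{\mathbb{K}[\Delta']}(t)+H_{\mathbb{K}[\Delta'']}(t)-1.$$
This is obtained by partitioning the connected components of $\Delta$ into those of $\Delta'$ and those of $\Delta''$, applying the proposition three times (to $\Delta$, $\Delta'$, and $\Delta''$ separately), and telescoping the three $-(r-1)$ corrections. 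Taking $\Delta'=\Delta(G)$ and $\Delta''=\Delta(\overline{\KK}_m)$ and substituting $H_{\mathbb{K}[\Delta(\overline{\KK}_m)]}(t)=1/(1-t)^m$ then produces the claimed formula directly.

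The computation itself is formal, so the only mildly subtle step is the upgrade from the connected-components version of the Hilbert series proposition to the two-summand additivity rule above; once that is in place, everything else is a one-line substitution. I would also remark that the hypothesis $m\le \tn{dim}(\Delta)+1$ is not actually needed for this Hilbert series derivation; it becomes natural only if one instead routes the corollary through the $h$-vector proposition, whose disjoint-union formula requires $\tn{dim}(\Delta(\overline{\KK}_m))=m-1\le \tn{dim}(\Delta(G))$.
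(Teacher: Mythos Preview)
Your proof is correct and follows essentially the same route as the paper: decompose $\Delta_{G*\overline{\KK}_m}=\Delta_G\cup\Delta_{\overline{\KK}_m}$ (this is Lemma~\ref{lem:disj}), observe that $\Delta_{\overline{\KK}_m}$ is the full $(m-1)$-simplex with Hilbert series $1/(1-t)^m$, and apply the additivity formula for Hilbert series of disjoint unions. One small simplification: the two-summand rule $H_{\mathbb{K}[\Delta'\cup\Delta'']}(t)=H_{\mathbb{K}[\Delta']}(t)+H_{\mathbb{K}[\Delta'']}(t)-1$ is already stated and proved in the paper as Proposition~\ref{Hilbert} (and its graph translation is Corollary~\ref{Hilbert:graph}), so your telescoping detour through the connected-components version is not needed---in fact in the paper the logic runs the other way, with the connected-components statement deduced from the two-summand one by induction. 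Your remark that the hypothesis $m\le\dim(\Delta)+1$ is superfluous for the Hilbert series statement is also correct; the paper itself drops it when the result reappears as Corollary~\ref{Hccom}.
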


\begin{cor} Let $G$ be a simple graph and $\mathcal{S}_m$ be the star graph on $m+1$ vertices. Then
$$H_{\mathbb{K}[G*\mathcal{S}_m]}(t)=H_{\mathbb{K}[G]}(t)+\frac{1}{(1-t)^m}+\frac{1}{1-t}-2.$$
\end{cor}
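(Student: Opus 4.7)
The plan is to exploit the fact that the independence complex of a graph-theoretic join decomposes as a disjoint union of the two original independence complexes. Indeed, because every pair $\{x,y\}$ with $x\in V(G)$ and $y\in V(\mathcal{S}_m)$ is an edge of $G*\mathcal{S}_m$, an independent set of $G*\mathcal{S}_m$ must lie entirely in $V(G)$ or entirely in $V(\mathcal{S}_m)$. Hence as simplicial complexes $\Delta(G*\mathcal{S}_m)=\Delta(G)\sqcup\Delta(\mathcal{S}_m)$, and the analysis reduces to understanding $\Delta(\mathcal{S}_m)$ and invoking the connected-components Hilbert-series formula proved just above.

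Next I would identify the connected components of $\Delta(\mathcal{S}_m)$. Writing the center of $\mathcal{S}_m$ as $c$ and its leaves as $v_1,\ldots,v_m$, the independent sets are precisely $\{c\}$ together with all subsets of $\{v_1,\ldots,v_m\}$, since the leaves are pairwise non-adjacent. Thus $\Delta(\mathcal{S}_m)$ has exactly two connected components: the isolated vertex $\{c\}$, whose Stanley-Reisner ring is a polynomial ring in one variable with Hilbert series $\frac{1}{1-t}$, and the full $(m-1)$-simplex on the leaves, whose Stanley-Reisner ring is $\mathbb{K}[v_1,\ldots,v_m]$ with Hilbert series $\frac{1}{(1-t)^m}$.

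Finally, I would assemble everything via the connected-components proposition. Let $\Delta_1,\ldots,\Delta_r$ be the connected components of $\Delta(G)$; then $\Delta(G*\mathcal{S}_m)$ has $r+2$ connected components, namely the $\Delta_j$'s together with $\{c\}$ and the simplex on the leaves. Applying the proposition to $\Delta(G)$ yields
$$\sum_{j=1}^{r}H_{\mathbb{K}[\Delta_j]}(t)=H_{\mathbb{K}[G]}(t)+(r-1),$$
while applying it to $\Delta(G*\mathcal{S}_m)$ gives
$$H_{\mathbb{K}[G*\mathcal{S}_m]}(t)=\sum_{j=1}^{r}H_{\mathbb{K}[\Delta_j]}(t)+\frac{1}{1-t}+\frac{1}{(1-t)^m}-(r+1).$$
Substituting the first identity into the second causes the $r$'s to cancel and leaves exactly the claimed formula.

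No step looks like a genuine obstacle: the disjoint-union description of independence complexes of joins is immediate, the two components of $\Delta(\mathcal{S}_m)$ are transparent, and the final assembly is pure bookkeeping. The corollary is essentially a direct application of the same template that produced the two preceding corollaries (for $\KK_m$ and $\overline{\KK}_m$), with $\mathcal{S}_m$ contributing one isolated vertex plus one simplex rather than $m$ isolated vertices or a single simplex.
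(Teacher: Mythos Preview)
Your proof is correct and follows essentially the same idea as the paper. The paper is slightly more streamlined: it observes that $G*\mathcal{S}_m\simeq G*\overline{\KK}_m*\KK_1$ (which is exactly your decomposition of $\Delta(\mathcal{S}_m)$ into the center vertex and the leaf simplex, rephrased at the graph level) and then applies the product Hilbert-series formula for $r$ graphs directly to this triple, so there is no need to pass through the connected components of $\Delta(G)$ and cancel the auxiliary parameter $r$.
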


\section{Basic setup}

For the convenience of the reader we include in this section the standard terminology and the basic facts which we will use throughout the paper.

A {\it simplicial complex} $\Delta$ over a set of vertices $V=\{x_1,\ldots,x_n\}$ is a subset of the powerset
of $V$ with that property that, whenever $F\in \Delta$ and $G\se F$, then $G\in\Delta$. The
elements of $\Delta$ are called {\it faces} and the {\it dimension} of a face is $\tn{dim}(F)=|F|-1$, where $|F|$ is the cardinality of $F$. Faces with dimension $0$ are called {\it vertices} and those with dimension $1$ are {\it edges}. A maximal face of $\Delta$ with respect to inclusion is called a {\it facet} of $\Delta$ and the {\it dimension} of $\Delta$, $\tn{dim}(\Delta)$, is the maximum dimension of its faces. If $\Delta$ is the  simplicial complex with the facets $F_1, . . . ,F_t$, then we write  $\Delta =\langle F_1, . . . ,F_t \rangle$.

Let $\Delta$ and $\Delta'$ be two simplicial complexes  with vertex sets $V$ and $V'$, respectively. The {\it union} $\Delta\cup\Delta'$ defines as the simplicial complex with the vertex set $V\cup V'$ and $F$ is a face of $\Delta\cup\Delta'$ if and only if $F$ is a face of $\Delta$ or $\Delta'$. If $V\cap V'=\emptyset$, then the {\it join} $\Delta * \Delta'$ is the simplicial complex on the vertex set $V\cup V'$ with faces $F\cup F'$ where $F\in \Delta$ and $F'\in \Delta'$. The {\it cone} of $\Delta$, denoted by $\tn{cone}(\Delta)$, is the join of a point $\{w\}$ with $\Delta$, that is, $\tn{cone}(\Delta)=\Delta *\{w\}$.

If $F\in \Delta$, then we define $x_F=\prod _{x_i\in F} x_i\in R=\mathbb{K}[x_1,\ldots,x_n]$ for some field $\mathbb{K}$. The {\it Stanley-Reisner ideal} of $\Delta$, denoted by $I_\Delta$ is
$$I_\Delta=(x_F~|~F\notin \Delta),$$
and the {\it Stanley-Reisner ring} of $\Delta$ is $\mathbb{K}[\Delta]=R/I_\Delta$.
For information on the basic theory of Stanley-Reisner ideals we refer the reader to \cite{BH} and \cite{S3}. In particular, in Section 4, we will make use of the fact that the $h$-vector of $\mathbb{K}[\Delta]$ is the $h$-vector of $\Delta$, i.e. if $h(\Delta)=(h_0,h_1,\ldots,h_{d+1})$  is the $h$-vector of $\Delta$, and $H_{\mathbb{K}[\Delta]}(t)$ is the Hilbert series of $\mathbb{K}[\Delta]$, then
$$H_{\mathbb{K}[\Delta]}(t)=\frac{\sum_{i=0}^{d+1}h_it^i}{(1-t)^{d+1}},$$
where $d=\tn{dim}(\Delta)$ (see for example, Section 5.1 in \cite{BH} for more details).

Let $M$ be an arbitrary graded $R$-module, and
let$$0\rightarrow\bigoplus_j{R(-j)^{\beta_{t,j}(M)}}\rightarrow\bigoplus_j{R(-j)^{\beta_{t-1,j}(M)}}\rightarrow
\cdots\rightarrow\bigoplus_j{R(-j)^{\beta_{0,j}(M)}}\rightarrow
M\rightarrow 0$$ be a minimal graded free resolution of $M$ over $R$,
where $R(-j)$ is a graded free $R$-module whose $n$th graded
component is given by $R_{n-j}$. The number $\beta_{i,j}(M)$ is
called the {\it $ij$th graded Betti number} of $M$ and equals the
number of generators of degree $j$ in the $i$th syzygy module. The
{\it Castelnuovo-Mumford regularity} of $M$ denoted by $\tn{reg}(M)$ is
defined by:
$$\tn{reg}(M) =\max\{j-i~|~\beta_{i,j}(M)\neq 0 \}.$$
Recall that the {\it projective dimension} of an $R$-module $M$,
denoted by $\tn{pd}(M)$, is the length of the minimal free resolution
of $M$, that is,
$$\tn{pd}(M) = \max\{i~|~\beta_{i,j}(M)\neq 0~\textnormal{for some}~j\}.$$

There is a strong connection between the topology of the simplicial
complex $\Delta$ and the structure of the free resolution of
$\mathbb{K}[\Delta]$. Let $\beta_{i,j}(\Delta)$ denotes the $\mathbb{N}$-graded Betti
numbers of the Stanley-Reisner ring $\mathbb{K}[\Delta]$. One of the most
well-known results is the Hochster's formula (\cite[Theorem
5.1]{Ho}).

\begin{thm} \label{Hoc}{\bf (Hochster's formula)} For $i>0$ the $\mathbb{N}$-graded Betti numbers $\beta_{i,j}$ of a
simplicial complex $\Delta$ are given by
$$\beta_{i,j}(\Delta)=\sum_{\substack{W\se V(\Delta)\\ |W|=j}}\textnormal{dim}_\mathbb{K}
\widetilde{H}_{j-i-1}(\Delta|_W;\mathbb{K}).$$
\end{thm}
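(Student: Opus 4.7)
The plan is to compute $\beta_{i,j}(\Delta)=\dim_\mathbb{K} \tn{Tor}_i^R(\mathbb{K}[\Delta],\mathbb{K})_j$ by passing to the finer $\mathbb{Z}^n$-multigrading on $R$ in which $x_i$ has degree $e_i$. Since $I_\Delta$ is a monomial (indeed squarefree) ideal, the minimal free resolution of $\mathbb{K}[\Delta]$ admits a $\mathbb{Z}^n$-graded structure, so each $\mathbb{N}$-graded Betti number decomposes as $\beta_{i,j}=\sum_{|\mathbf{a}|=j}\beta_{i,\mathbf{a}}$. The problem then reduces to showing that $\beta_{i,\mathbf{a}}(\Delta)=0$ unless $\mathbf{a}=\mathbf{1}_W$ for some $W\se V(\Delta)$, and that in the squarefree case $\beta_{i,\mathbf{1}_W}(\Delta)=\dim_\mathbb{K}\widetilde{H}_{|W|-i-1}(\Delta|_W;\mathbb{K})$, after which summing over $W$ with $|W|=j$ delivers the claimed formula.

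First I would compute Tor via the Koszul complex $K_\bullet$ on $x_1,\ldots,x_n$, which is a minimal $\mathbb{Z}^n$-graded free resolution of $\mathbb{K}$ over $R$. This gives
$$\tn{Tor}_i^R(\mathbb{K}[\Delta],\mathbb{K})_\mathbf{a}\cong H_i(\mathbb{K}[\Delta]\otimes K_\bullet)_\mathbf{a}.$$
A direct analysis in any non-squarefree multidegree $\mathbf{a}$ (some coordinate $a_k\geq 2$) shows that the variable $x_k$ induces a contracting homotopy on this strand—essentially because $\mathbb{K}[\Delta]$ is squarefree, so multiplication by $x_k$ in the relevant degrees is an isomorphism or zero in a way that trivializes the complex. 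Hence the nonzero Betti numbers occur only in squarefree multidegrees $\mathbf{1}_W$.

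Next, for $\mathbf{a}=\mathbf{1}_W$, I would identify the strand $(\mathbb{K}[\Delta]\otimes K_\bullet)_{\mathbf{1}_W}$ with a shifted reduced chain complex of $\Delta|_W$. A $\mathbb{K}$-basis of $(\mathbb{K}[\Delta]\otimes K_i)_{\mathbf{1}_W}$ is indexed by pairs $(F,\tau)$ with $\tau\se W$, $|\tau|=i$, and $F=W\setminus\tau\in\Delta|_W$; equivalently, by faces of $\Delta|_W$ of dimension $|W|-i-1$. Fixing consistent orientations on these faces, one checks that the Koszul differential translates (up to a global sign) into the simplicial boundary of $\Delta|_W$, so the strand becomes the augmented reduced chain complex of $\Delta|_W$ shifted by $|W|$. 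Taking $i$-th homology yields the desired $\widetilde{H}_{|W|-i-1}(\Delta|_W;\mathbb{K})$.

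The main obstacle is the careful bookkeeping in the last step: matching the Koszul differential with the simplicial boundary map requires a consistent orientation of the faces of each $\Delta|_W$ and a careful sign analysis as the subsets $\tau$ grow. Proving the acyclicity claim for non-squarefree multidegrees is comparatively easier and follows from a short direct calculation once one fixes a coordinate with $a_k\geq 2$. Once both ingredients are in place, summing $\beta_{i,\mathbf{1}_W}$ over all $W\se V(\Delta)$ with $|W|=j$ concludes the proof.
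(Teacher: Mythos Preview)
The paper does not prove this statement at all: Hochster's formula is quoted as a classical result with a citation to \cite[Theorem~5.1]{Ho}, and no argument is given. So there is no ``paper's own proof'' to compare against.

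That said, your outline is the standard and correct route to Hochster's formula. Computing $\tn{Tor}_i^R(\mathbb{K}[\Delta],\mathbb{K})$ via the Koszul resolution of $\mathbb{K}$, decomposing into $\mathbb{Z}^n$-multidegrees, showing vanishing in non-squarefree multidegrees, and identifying the squarefree strand $(\mathbb{K}[\Delta]\otimes K_\bullet)_{\mathbf{1}_W}$ with the (shifted) augmented chain complex of $\Delta|_W$ is exactly how the result is established in the standard references (e.g.\ Bruns--Herzog \cite{BH} or Stanley \cite{S3}). Your description of the two technical points---the contracting homotopy in a non-squarefree multidegree and the sign bookkeeping matching Koszul and simplicial boundaries---is accurate, and neither hides a genuine obstacle.
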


Let $G$ be a simple graph. A subset $F$ of $V(G)$ is called an {\it independent set} of $G$ if  any subsets of $F$ with cardinality two do not belong to $E(G)$. The family of all independent sets of $G$ is a simplicial complex on the vertex set $V(G)$,
which is called the {\it independence complex} of $G$ and is denoted by  $\Delta_G$. We will use $\beta_{i,j}(G)$ for the $\mathbb{N}$-graded Betti numbers of $\Delta_G$. Since $\mathbb{K}[\Delta_G]=R/I(G)$, we have $\beta_{i,j}(G)=\beta_{i,j}(R/I(G))$.

The {\it cover ideal} of $G$, denoted by
$I(G)^\vee$, is defined to be the square-free monomial ideal
$$I(G)^\vee=(x_F~|~ F \mbox { is a (minimal) vertex cover of }G~),$$
where $x_F=\prod _{x_i\in F} x_i.$

We require the following result of N.Terai which is in \cite{T}.

\begin{lem} \label{lem:dual}
For a graph $G$ we have $\tn{reg}(R/I(G))=\tn{pd}(I(G)^\vee)$.
\end{lem}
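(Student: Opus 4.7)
The plan is to reduce the claim to Terai's duality formula between the regularity of a Stanley--Reisner ideal and the projective dimension of its Alexander dual. The key observation is that the cover ideal $I(G)^\vee$ is exactly the Stanley--Reisner ideal $I_{\Delta_G^\vee}$ of the Alexander dual of the independence complex $\Delta_G$. Indeed, the minimal non-faces of $\Delta_G$ are precisely the edges of $G$, so the facets of $\Delta_G^\vee$ are the complements of edges of $G$, and hence the minimal monomial generators of $I_{\Delta_G^\vee}$ are indexed by the minimal vertex covers of $G$, which matches the definition of $I(G)^\vee$.

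Next I would invoke Terai's general formula: for any simplicial complex $\Delta$,
$$\tn{pd}(R/I_{\Delta^\vee})=\tn{reg}(I_\Delta).$$
Specialising to $\Delta=\Delta_G$ this reads $\tn{pd}(R/I(G)^\vee)=\tn{reg}(I(G))$. The short exact sequence $0\to I\to R\to R/I\to 0$ applied successively to $I(G)$ and to $I(G)^\vee$ gives the elementary shifts $\tn{reg}(I(G))=\tn{reg}(R/I(G))+1$ and $\tn{pd}(R/I(G)^\vee)=\tn{pd}(I(G)^\vee)+1$; subtracting $1$ from both sides of Terai's identity then yields $\tn{reg}(R/I(G))=\tn{pd}(I(G)^\vee)$, as required.

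The main obstacle, which really belongs to Terai's theorem itself rather than to this lemma, is the proof of the duality formula. It is obtained by combining Hochster's formula (Theorem~\ref{Hoc}) for the graded Betti numbers of $I_\Delta$ and of $R/I_{\Delta^\vee}$ with combinatorial Alexander duality, which relates $\widetilde{H}_\bullet(\Delta|_W;\mathbb{K})$ and $\widetilde{H}^\bullet(\Delta^\vee|_W;\mathbb{K})$ with a degree shift depending on $|W|$. A careful bookkeeping of indices converts the supremum over $j-i$ defining $\tn{reg}(I_\Delta)$ on one side into the supremum over $i$ defining $\tn{pd}(R/I_{\Delta^\vee})$ on the other. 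For the purposes of the present paper one may simply cite Terai's lemma as a black box, which is presumably what the author does.
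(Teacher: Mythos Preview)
Your proposal is correct, and your final sentence is exactly on the mark: the paper gives no proof at all of this lemma, merely stating it with the attribution ``We require the following result of N.~Terai which is in \cite{T}.'' Thus your write-up actually supplies more than the paper does --- a correct derivation from Terai's general duality $\tn{pd}(R/I_{\Delta^\vee})=\tn{reg}(I_\Delta)$ via the identification $I(G)^\vee=I_{\Delta_G^\vee}$ and the standard shifts $\tn{reg}(I)=\tn{reg}(R/I)+1$, $\tn{pd}(R/I)=\tn{pd}(I)+1$.
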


The {\it Complement} of a graph $G$ is the graph $\overline{G}$ with the vertex set $V(G)$ and edges all the pairs $\{x_i,x_j\}$
such that $i\neq j$ and $\{x_i,x_j\}\notin E(G)$. Also for $W\se V(G)$ we use $G\setminus W$ for the subgraph of $G$ with the vertex set $V(G)\setminus W$ whose edge set is $\{ \{x,y \} \in E(G) ~|~  \{x,y \} \cap W = \emptyset\}$. We call a graph {\it discrete} if it has no edges.

Throughout this paper we use $\mathcal{C}_n$ for the cycle graph on $n$ vertices, i.e. $V(\mathcal{C}_n)=\{x_1,...,x_n\}$ and
$E(\mathcal{C}_n)=\{\{x_1,x_2\},...,\{x_{n-1},x_n\},\{x_n,x_1\}\}$, and
$\mathcal{W}_n$ for the wheel graph on $n+1$ vertices, i.e.
$V(\mathcal{W}_n)=\{x_1,...,x_{n+1}\}$ and
$E(\mathcal{W}_n)=\{\{x_1,x_2\},...,\{x_{n-1}\\,x_n\},\{x_n,x_1\},\{x_1,x_{n+1}\},...,\{x_n,x_{n+1}\}\}$. Also we use $\KK_n$ for the complete graph over $n$ vertices, and $\KK_{m,n}$ for the complete bipartite graph with vertex partition $V\cup V'$ where $|V|=m$ and $|V'|=n$.

\section{Betti numbers and regularity}

In this section we will make use of Whieldon's result on Betti numbers of product of graphs to reprove some results on Betti numbers of some families of graphs and also we will bring some results on the (Castelnuovo-Mumford) regularity of graph ideals. We begin this section with the next lemma which can be proved using Hochster's formula (Theorem \ref{Hoc}) with a similar argument as in the proof of \cite[Lemma 5.4]{W}.

\begin{lem} \label{lem:un}Let $\Delta_1$ and $\Delta_2$ be two simplicial complexes with disjoint vertex sets having $m$ and $n$ vertices, respectively. Also Let $\Delta=\Delta_1\cup\Delta_2$. Then the $\mathbb{N}$-graded Betti numbers $\beta_{i,d}(\Delta)$ can be expressed as
\begin{gather*}
\begin{cases}
\sum_{j=0}^{d-2}\{\binom{n}{j}\beta_{i-j,d-j}(\Delta_1)+\binom{m}{j}\beta_{i-j,d-j}(\Delta_2)\}&{~\tn{ if }~ d\neq i+1}\\
\\
\sum_{j=0}^{d-2}\{\binom{n}{j}\beta_{i-j,d-j}(\Delta_1)+\binom{m}{j}\beta_{i-j,d-j}(\Delta_2)\}+
\sum_{j=1}^{d-1}\binom{m}{j}\binom{n}{d-j}&{~\tn{ if }~ d= i+1}.
\end{cases}
\end{gather*}
\end{lem}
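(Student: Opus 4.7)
The plan is to apply Hochster's formula (Theorem~\ref{Hoc}) to $\Delta=\Delta_1\cup\Delta_2$ and reduce the computation to the reduced simplicial homology of disjoint unions of induced subcomplexes. Since $V(\Delta_1)\cap V(\Delta_2)=\emptyset$, every $W\se V(\Delta)$ with $|W|=d$ decomposes uniquely as $W=W_1\cup W_2$ where $W_j=W\cap V(\Delta_j)$, and consequently $\Delta|_W$ is the disjoint union $\Delta_1|_{W_1}\sqcup\Delta_2|_{W_2}$. I would then split Hochster's sum according to whether $W_1$ or $W_2$ is empty.

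The degenerate cases $W_1=\emptyset$ and $W_2=\emptyset$ contribute exactly $\beta_{i,d}(\Delta_2)$ and $\beta_{i,d}(\Delta_1)$, respectively, giving the $j=0$ summands in the statement. For the main case where $|W_1|=a\geq 1$ and $|W_2|=b\geq 1$, I would invoke the standard identity for the reduced homology of a disjoint union of nonempty spaces,
$$\widetilde{H}_k(X\sqcup Y;\mathbb{K})\cong \widetilde{H}_k(X;\mathbb{K})\oplus \widetilde{H}_k(Y;\mathbb{K})\oplus \epsilon_k\,\mathbb{K},$$
where $\epsilon_k=1$ if $k=0$ and $\epsilon_k=0$ otherwise. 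Applied with $k=d-i-1$, this cleanly splits each summand into a $\Delta_1$ contribution, a $\Delta_2$ contribution, and---precisely when $d=i+1$---an additional $+1$ for each such pair $(W_1,W_2)$.

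Next, I would group the $\Delta_1$ contributions by fixing $|W_2|=b$: there are $\binom{n}{b}$ choices of $W_2$, and summing $\tn{dim}_{\mathbb{K}}\widetilde{H}_{d-i-1}(\Delta_1|_{W_1})$ over $W_1\se V(\Delta_1)$ with $|W_1|=d-b$ equals $\beta_{i-b,d-b}(\Delta_1)$ by a second application of Hochster's formula. Relabelling $j=b$ yields the term $\binom{n}{j}\beta_{i-j,d-j}(\Delta_1)$, and symmetrically for $\Delta_2$. For the correction when $d=i+1$, the pairs $(W_1,W_2)$ with $|W_1|=j$ and $|W_2|=d-j$ are counted by $\binom{m}{j}\binom{n}{d-j}$, producing the second summation.

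The only subtle bookkeeping point is to justify truncating the first sum at $j=d-2$: the missing term $j=d-1$ corresponds to $|W_1|=1$, in which case $\Delta_1|_{W_1}$ is a single point and its reduced homology vanishes in every degree, so $\beta_{i-(d-1),1}(\Delta_1)=0$ (equivalently, a Stanley--Reisner ring has no linear syzygies). The main technical hurdle I anticipate is precisely this indexing---keeping the empty and nonempty subcases separate, lining up the index shifts so that Hochster's formula reapplies to the pieces, and correctly isolating the $k=0$ correction---but the overall strategy follows the pattern of Whieldon's proof of \cite[Lemma 5.4]{W}.
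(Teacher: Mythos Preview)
Your proposal is correct and follows exactly the approach the paper itself indicates: the paper does not give a detailed proof but simply says the lemma ``can be proved using Hochster's formula (Theorem~\ref{Hoc}) with a similar argument as in the proof of \cite[Lemma 5.4]{W},'' and that is precisely what you carry out. Your handling of the empty/nonempty split, the $\widetilde{H}_0$ correction in the case $d=i+1$, and the truncation at $j=d-2$ via $\beta_{\bullet,1}=0$ are all in order.
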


\begin{lem} \label{lem:disj} Let $G$ and $H$ be two simple graphs whose vertex sets are disjoint. Then $\Delta_{G*H}=\Delta_G\cup\Delta_H$ is the disjoint union of two simplicial complexes.
\end{lem}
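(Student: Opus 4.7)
The plan is a direct unpacking of definitions. I want to show that a subset $F\subseteq V(G)\cup V(H)$ is a face of $\Delta_{G*H}$, i.e.\ an independent set in $G*H$, if and only if $F$ is a face of $\Delta_G$ or a face of $\Delta_H$. Since the ambient vertex set of $\Delta_G\cup\Delta_H$ is $V(G)\cup V(H)=V(G*H)$, this will match the definition of union of simplicial complexes exactly.

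First I would prove the forward direction. Suppose $F$ is an independent set of $G*H$. The crucial feature of the product is that $\{x,y\}\in E(G*H)$ for every $x\in V(G)$ and every $y\in V(H)$. Consequently, $F$ cannot contain both a vertex of $V(G)$ and a vertex of $V(H)$, for otherwise it would contain such a cross-edge. Hence either $F\subseteq V(G)$ or $F\subseteq V(H)$. In the first case, because $E(G)\subseteq E(G*H)$, the independence of $F$ in $G*H$ forces $F$ to be independent in $G$, so $F\in\Delta_G$; the second case is symmetric and gives $F\in\Delta_H$.

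For the reverse direction, suppose $F\in\Delta_G$, so that $F\subseteq V(G)$ and no two vertices of $F$ form an edge of $G$. The only edges of $G*H$ with both endpoints in $V(G)$ are precisely those in $E(G)$ (the cross-edges require an endpoint in $V(H)$, and $E(H)$ lies in $V(H)$). Hence $F$ contains no edge of $G*H$, and $F\in\Delta_{G*H}$. The case $F\in\Delta_H$ is identical.

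Combining the two directions, the face sets of $\Delta_{G*H}$ and $\Delta_G\cup\Delta_H$ coincide, which gives the claimed equality. Note that since $V(G)\cap V(H)=\emptyset$, no face of $\Delta_{G*H}$ meets both vertex parts; this is exactly what makes $\Delta_G\cup\Delta_H$ a \emph{disjoint} union of complexes in the sense needed to apply Lemma~\ref{lem:un}. There is no genuine obstacle here; the content of the statement is the observation that adding every cross-edge collapses the join of two graphs, on the level of independence complexes, to a disjoint union rather than a join.
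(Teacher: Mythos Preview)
Your proof is correct and follows essentially the same approach as the paper's own proof: both use the cross-edges of $G*H$ to force any independent set into one side, and observe that independent sets in $G$ (or $H$) remain independent in $G*H$. The only cosmetic difference is that the paper phrases the argument in terms of facets while you work with arbitrary faces, and you spell out a bit more explicitly why the reverse inclusion holds.
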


\begin{proof} First suppose $F$ is a facet of $\Delta_{G*H}$. Then $F$ is an independent set of $G*H$. Since $G*H$ contains every edge of the form $\{x,y\}$ that $x\in V(G)$ and $y\in V(H)$, we get that $F\se V(G)$ or $F\se V(H)$. So $F\in \Delta_G$ or $F\in \Delta_H$. Conversely, if $F$ is a facet of $\Delta_G$, then $F$ is an independent set in $G$ and hence in $G*H$. Therefore $F\in\Delta_{G*H}$.
\end{proof}

\begin{rem} Since a Cohen-Macaulay complex of positive dimension is connected, we get the result that $G*H$ is Cohen-Macaulay if and only if $G$ and $H$ are complete graphs.
\end{rem}

Let $\beta_{i,j}(G*H)$ denotes the $\mathbb{N}$-graded Betti numbers of $\Delta_{G*H}$. Then we have the following translation of Lemma \ref{lem:un} to edge ideals which is Lemma 5.4 in \cite{W}.

\begin{cor} \label{cor:prod} Let $G$ and $H$ be two simple graphs with disjoint vertex sets having $m$ and $n$ vertices, respectively. Then the $\mathbb{N}$-graded Betti numbers $\beta_{i,d}(G*H)$ may be expressed as
\begin{gather*}
\begin{cases}
\sum_{j=0}^{d-2}\{\binom{n}{j}\beta_{i-j,d-j}(G)+\binom{m}{j}\beta_{i-j,d-j}(H)\}&{~\tn{ if }~ d\neq i+1}\\
\\
\sum_{j=0}^{d-2}\{\binom{n}{j}\beta_{i-j,d-j}(G)+\binom{m}{j}\beta_{i-j,d-j}(H)\}+
\sum_{j=1}^{d-1}\binom{m}{j}\binom{n}{d-j}&{~\tn{ if }~ d= i+1}.
\end{cases}
\end{gather*}
\end{cor}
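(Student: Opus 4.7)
The plan is to observe that this corollary is essentially a direct translation of Lemma \ref{lem:un} from simplicial-complex language to edge-ideal language, with Lemma \ref{lem:disj} supplying the key identification that lets the translation go through. So the strategy is just to stitch together the three ingredients already developed in this section.

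First I would recall that for any graph $G$, the paper has set up the convention $\beta_{i,j}(G) = \beta_{i,j}(R/I(G)) = \beta_{i,j}(\Delta_G)$, where the last equality uses $\mathbb{K}[\Delta_G] = R/I(G)$. Thus the Betti numbers $\beta_{i,d}(G*H)$ appearing in the corollary are, by definition, precisely the $\mathbb{N}$-graded Betti numbers of the Stanley--Reisner ring of the independence complex $\Delta_{G*H}$.

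Next I would invoke Lemma \ref{lem:disj} to write $\Delta_{G*H} = \Delta_G \cup \Delta_H$, where the union is a disjoint union of simplicial complexes because $V(G) \cap V(H) = \emptyset$. Since $G$ has $m$ vertices and $H$ has $n$ vertices, the independence complexes $\Delta_G$ and $\Delta_H$ have $m$ and $n$ vertices respectively, matching the hypotheses of Lemma \ref{lem:un} with $\Delta_1 = \Delta_G$ and $\Delta_2 = \Delta_H$.

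Finally I would apply Lemma \ref{lem:un} directly and substitute $\beta_{i-j,d-j}(\Delta_G) = \beta_{i-j,d-j}(G)$ and $\beta_{i-j,d-j}(\Delta_H) = \beta_{i-j,d-j}(H)$ into both branches of the piecewise formula, which yields exactly the expression claimed in the corollary. There is no real obstacle here; the content of the corollary is Lemma \ref{lem:un}, and what requires a proof (namely that the disjoint-union formula applies at all) is Lemma \ref{lem:disj}. The only thing to be mildly careful about is matching the roles of $m$ and $n$ correctly, since the binomial coefficients $\binom{m}{j}$ and $\binom{n}{j}$ in Lemma \ref{lem:un} are paired with $\Delta_2$ and $\Delta_1$ respectively, so in the translation $\binom{n}{j}$ must multiply the $G$-terms and $\binom{m}{j}$ must multiply the $H$-terms.
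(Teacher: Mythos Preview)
Your proposal is correct and matches the paper's approach exactly: the paper presents this corollary explicitly as the translation of Lemma~\ref{lem:un} to edge ideals via Lemma~\ref{lem:disj}, with no additional argument given. Your care in matching the roles of $m$ and $n$ with the correct complexes is appropriate and completes the (essentially formal) proof.
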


Now we bring some classical results using the above formula. The first one is Theorem 5.2.4 in \cite{J}.

\begin{cor} The $\mathbb{N}$-graded Betti numbers of the complete bipartite graph $\KK_{m,n}$ may be expressed as
\begin{gather*}
\beta_{i,d}(\KK_{m,n})=
\begin{cases}
0 &{~\tn{ if }~ d\neq i+1}\\
\\
\sum_{j=1}^{i}\binom{m}{j}\binom{n}{i-j+1}&{~\tn{ if }~ d= i+1}.
\end{cases}
\end{gather*}
\end{cor}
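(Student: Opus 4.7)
The plan is to realise $\mathcal{K}_{m,n}$ as a product of two discrete graphs and then apply Corollary \ref{cor:prod}. Specifically, if $\overline{\mathcal{K}}_m$ and $\overline{\mathcal{K}}_n$ denote the discrete graphs on $m$ and $n$ vertices (i.e.\ graphs with no edges), then by the very definition of the graph product, the edge set of $\overline{\mathcal{K}}_m * \overline{\mathcal{K}}_n$ consists of every pair $\{x,y\}$ with $x$ in one vertex class and $y$ in the other. Hence $\mathcal{K}_{m,n} = \overline{\mathcal{K}}_m * \overline{\mathcal{K}}_n$, and Corollary \ref{cor:prod} applies directly to compute its graded Betti numbers.

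Next I would observe that for a discrete graph $D$ on $k$ vertices, the edge ideal $I(D)$ is the zero ideal, so the Stanley-Reisner ring is the polynomial ring $\mathbb{K}[x_1,\ldots,x_k]$ itself. Its minimal free resolution is trivial, so $\beta_{0,0}(D)=1$ and $\beta_{i,d}(D)=0$ whenever $(i,d)\neq(0,0)$. In particular $\beta_{i-j,d-j}(\overline{\mathcal{K}}_m)=\beta_{i-j,d-j}(\overline{\mathcal{K}}_n)=0$ for every index $(i-j,d-j)$ appearing in the Corollary \ref{cor:prod} sums: indeed those sums run over $0\le j\le d-2$, forcing $d-j\ge 2$ and hence the pair $(i-j,d-j)$ is never $(0,0)$.

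Applying this observation, Corollary \ref{cor:prod} collapses: if $d\neq i+1$, then every summand vanishes, giving $\beta_{i,d}(\mathcal{K}_{m,n})=0$. If $d=i+1$, the first two sums still vanish for the same reason, and only the residual term
\[
\sum_{j=1}^{d-1}\binom{m}{j}\binom{n}{d-j}
\]
survives. Substituting $d=i+1$ transforms this into $\sum_{j=1}^{i}\binom{m}{j}\binom{n}{i-j+1}$, which is exactly the claimed value. I do not expect any real obstacle: the only subtlety is the bookkeeping confirmation that $d-j\ge 2$ throughout the Corollary \ref{cor:prod} sums, so that the vanishing of higher Betti numbers of the two discrete factors kills every non-combinatorial term and leaves precisely the stated binomial expression.
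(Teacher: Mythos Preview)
Your proposal is correct and follows essentially the same approach as the paper: both recognise $\mathcal{K}_{m,n}=\overline{\mathcal{K}}_m*\overline{\mathcal{K}}_n$ and apply Corollary~\ref{cor:prod} using the vanishing of the graded Betti numbers of discrete graphs. Your version simply makes explicit the bookkeeping (in particular that $d-j\ge 2$ rules out the $(0,0)$ term) which the paper leaves implicit.
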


\begin{proof} It is enough to notice that $\KK_{m,n}=\overline{\KK}_m*\overline{\KK}_n$ and that all graded Betti numbers of $\overline{\KK}_t$ are zero since it is a discrete graph.
\end{proof}

\begin{cor} \label{cor:K1} Let $H$ be a simple graph with $m+1$ vertices and let $x\in V(H)$ be adjacent to all other vertices of $H$. Then
\begin{gather*}
\beta_{i,d}(H)=
\begin{cases}
\beta_{i,d}(G)+\beta_{i-1,d-1}(G)&{~\tn{ if }~ d\neq i+1}\\
\\
\beta_{i,d}(G)+\beta_{i-1,d-1}(G)+\binom{m}{i}&{~\tn{ if }~ d=i+1},
\end{cases}
\end{gather*}
where $G=H\setminus\{x\}$.
\end{cor}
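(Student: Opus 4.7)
The plan is to recognize that the hypothesis ``$x$ is adjacent to every other vertex of $H$'' is equivalent to $H = G * \KK_1$, where $\KK_1$ denotes the graph consisting of the single vertex $x$ (with no edges) and $G = H\setminus\{x\}$ has $m$ vertices. The conclusion should then drop out of Corollary \ref{cor:prod} by specializing the second factor to have $n=1$ vertex.

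Concretely, I would substitute $n=1$ into the two cases of Corollary \ref{cor:prod}. Since $\binom{1}{j}=0$ for $j\ge 2$, the sum $\sum_{j=0}^{d-2}\binom{1}{j}\beta_{i-j,d-j}(G)$ collapses to only its $j=0$ and $j=1$ contributions, producing exactly $\beta_{i,d}(G)+\beta_{i-1,d-1}(G)$. The companion sum $\sum_{j=0}^{d-2}\binom{m}{j}\beta_{i-j,d-j}(\KK_1)$ vanishes identically: the independence complex of $\KK_1$ is a single point, so its Stanley--Reisner ring is the polynomial ring $\mathbb{K}[x]$, whose minimal free resolution is concentrated in homological degree zero. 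Since every index $(i-j,d-j)$ appearing in the sum satisfies $d-j\ge 2$, each $\beta_{i-j,d-j}(\KK_1)$ is zero.

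For the exceptional case $d=i+1$ there is the additional correction $\sum_{j=1}^{d-1}\binom{m}{j}\binom{1}{d-j}$. The factor $\binom{1}{d-j}$ is nonzero only when $d-j\in\{0,1\}$; the value $d-j=0$ falls outside the summation range $1\le j\le d-1$, so only $j=d-1=i$ survives, contributing the single term $\binom{m}{i}$, which matches the stated correction.

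The entire argument is essentially bookkeeping once the product interpretation $H = G * \KK_1$ is in hand; there is no deep obstacle. The one point requiring a moment's care is verifying that the $\beta_{i-j,d-j}(\KK_1)$ contribution truly vanishes across the entire index range, which reduces to the harmless observation that $\mathbb{K}[x]$ is free over itself.
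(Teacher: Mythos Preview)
Your proposal is correct and follows exactly the paper's approach: the paper also observes $H\simeq G*\KK_1$, invokes Corollary \ref{cor:prod}, and notes that $\binom{1}{j}\ne 0$ only for $j\in\{0,1\}$ and that $\sum_{j=1}^{i}\binom{m}{j}\binom{1}{i-j+1}=\binom{m}{i}$. Your write-up is simply more explicit about why the $\beta_{i-j,d-j}(\KK_1)$ terms vanish, which the paper leaves implicit.
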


\begin{proof} This is an immediate consequence of Corollary \ref{cor:prod} together with $H\simeq G*\KK_1$. It is enough to observe that $\binom{1}{j}\neq 0$ provided $j\in\{0,1\}$, and that $\sum_{j=1}^{i}\binom{m}{j}\binom{1}{i-j+1}=\binom{m}{i}$.
\end{proof}

As an special case of Corollary \ref{cor:K1} we have the next result which is the content of \cite[Theorem 5.1]{EFMM}.

\begin{cor} Let $\WW_m$ denotes the wheel graph with $m+1$ vertices and let $\CC_m$ be the cycle graph with $m$ vertices. Then
\begin{gather*}
\beta_{i,d}(\WW_m)=
\begin{cases}
\beta_{i,d}(\CC_m)+\beta_{i-1,d-1}(\CC_m)&{~\tn{ if }~ d\neq i+1}\\
\\
\beta_{i,d}(\CC_m)+\beta_{i-1,d-1}(\CC_m)+\binom{m}{i}&{~\tn{ if }~ d=i+1}.
\end{cases}
\end{gather*}
\end{cor}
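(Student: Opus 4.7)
The plan is to apply Corollary \ref{cor:K1} directly, once we identify the wheel graph $\WW_m$ as an instance of the setup in that corollary. Recall from the paper's notation that $V(\WW_m)=\{x_1,\ldots,x_{m+1}\}$ and that the edge set includes both the cycle edges on $x_1,\ldots,x_m$ and all the spokes $\{x_i,x_{m+1}\}$ for $1\leq i\leq m$. In particular, the hub vertex $x_{m+1}$ is adjacent to every other vertex of $\WW_m$, so $\WW_m$ fits the hypothesis of Corollary \ref{cor:K1} with $H=\WW_m$, $x=x_{m+1}$, and ambient vertex count $m+1$.

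Next I would observe that deleting the hub recovers the cycle, namely $\WW_m\setminus\{x_{m+1}\}=\CC_m$, since removing $x_{m+1}$ destroys all spokes but leaves precisely the cycle edges $\{x_1,x_2\},\ldots,\{x_{m-1},x_m\},\{x_m,x_1\}$. Therefore the graph $G$ appearing on the right-hand side of Corollary \ref{cor:K1} is exactly $\CC_m$, and substituting $G=\CC_m$ into the two cases of that corollary yields the claimed formula verbatim.

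There is really no technical obstacle here: the entire proof is the observation that $\WW_m\simeq \CC_m*\KK_1$, after which Corollary \ref{cor:K1} (which itself came from Corollary \ref{cor:prod} with $H=\KK_1$) does all the work. The only thing to be slightly careful about is to note that $m$ in Corollary \ref{cor:K1} refers to the number of vertices of $G$, which matches the $m$ vertices of $\CC_m$, so the binomial coefficient $\binom{m}{i}$ in the $d=i+1$ case transfers directly without any reindexing.
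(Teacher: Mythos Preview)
Your proposal is correct and matches the paper's approach exactly: the paper presents this corollary as an immediate special case of Corollary~\ref{cor:K1}, obtained by taking $H=\WW_m$, $x=x_{m+1}$, and $G=\CC_m$. Your additional remark about the parameter $m$ matching without reindexing is accurate and makes explicit what the paper leaves implicit.
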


The star graph $\mathcal{S}_m$ is a graph over $m+1$ vertices with one vertex having vertex degree $m$ and the other $m$ vertices having vertex degree $1$. Since $\mathcal{S}_m=\overline{\KK}_m*\KK_1$, the following result is straightforward which is the content of \cite[Theorem 5.4.11]{J}.

\begin{cor} \label{star} Let $\mathcal{S}_m$ denotes the star graph with $m+1$ vertices. Then
\begin{gather*}
\beta_{i,d}(\mathcal{S}_m)=
\begin{cases}
0&{~\tn{ if }~ d\neq i+1}\\
\\
\binom{m}{i}&{~\tn{ if }~ d=i+1}.
\end{cases}
\end{gather*}
\end{cor}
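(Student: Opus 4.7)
The plan is to apply Corollary \ref{cor:prod} directly to the factorization $\mathcal{S}_m = \overline{\mathcal{K}}_m * \mathcal{K}_1$, which is immediate from the definitions: the star graph has one central vertex adjacent to each of its $m$ pairwise non-adjacent leaves, and this is exactly the product of a discrete graph on $m$ vertices with a single isolated vertex.

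The key simplification is that both factors are discrete, hence have empty edge ideals. Therefore their Stanley-Reisner rings are free, and all positive graded Betti numbers $\beta_{i,j}(\overline{\mathcal{K}}_m)$ and $\beta_{i,j}(\mathcal{K}_1)$ vanish. Applying Corollary \ref{cor:prod} with $G = \overline{\mathcal{K}}_m$ on $m$ vertices and $H = \mathcal{K}_1$ on $n = 1$ vertex, the double sum
$$\sum_{j=0}^{d-2}\left\{\binom{1}{j}\beta_{i-j,d-j}(\overline{\mathcal{K}}_m) + \binom{m}{j}\beta_{i-j,d-j}(\mathcal{K}_1)\right\}$$
collapses to zero. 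This immediately gives $\beta_{i,d}(\mathcal{S}_m) = 0$ for $d \neq i+1$.

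For the case $d = i+1$, the only contribution comes from the extra term $\sum_{j=1}^{d-1}\binom{m}{j}\binom{1}{d-j}$. Since $\binom{1}{d-j}$ is nonzero only when $d - j \in \{0, 1\}$, and since $j$ ranges over $1 \leq j \leq d - 1 = i$, only the index $j = i$ survives, producing $\binom{m}{i}\binom{1}{1} = \binom{m}{i}$, exactly as claimed.

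There is essentially no obstacle: once the identification $\mathcal{S}_m = \overline{\mathcal{K}}_m * \mathcal{K}_1$ is made, the rest is a bookkeeping exercise with the binomial coefficients $\binom{1}{j}$, which force all but one term of the product formula to vanish.
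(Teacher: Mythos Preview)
Your proof is correct and follows exactly the approach indicated in the paper: the paper observes that $\mathcal{S}_m=\overline{\mathcal{K}}_m*\mathcal{K}_1$ and declares the result straightforward, and your argument is precisely the routine unpacking of Corollary~\ref{cor:prod} in this case. (One could equally invoke Corollary~\ref{cor:K1} with $G=\overline{\mathcal{K}}_m$, which is the same computation already packaged.)
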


\begin{cor} Let $G$ and be a simple graphs with $m$ vertices and let $\mathcal{S}_n$ denotes the star graph over $n+1$ vertices. Then the $\mathbb{N}$-graded Betti numbers $\beta_{i,d}(G*\mathcal{S}_n)$ may be expressed as
\begin{gather*}
\begin{cases}
\sum_{j=0}^{d-2}\binom{n+1}{j}\beta_{i-j,d-j}(G)&{~\tn{ if }~ d\neq i+1}\\
\\
\sum_{j=0}^{d-2}\binom{n+1}{j}\beta_{i-j,d-j}(G)+\binom{m+n+1}{i+1}+\binom{m+n}{i}-\binom{m+1}{i+1}-\binom{n+1}{i+!}&{~\tn{ if }~ d= i+1}.
\end{cases}
\end{gather*}
\end{cor}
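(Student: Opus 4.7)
The plan is to apply the product formula (Corollary \ref{cor:prod}) to $G*\mathcal{S}_n$, viewing $\mathcal{S}_n$ as the second factor (with $n+1$ vertices), and then substitute the known Betti numbers of $\mathcal{S}_n$ from Corollary \ref{star} and simplify using Vandermonde's identity together with Pascal's identity.

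First I would observe that by Corollary \ref{star}, $\beta_{a,b}(\mathcal{S}_n)$ is nonzero only when $b=a+1$, in which case it equals $\binom{n}{a}$. Plugging $H=\mathcal{S}_n$ into Corollary \ref{cor:prod} with $|V(\mathcal{S}_n)|=n+1$ gives
$$\beta_{i,d}(G*\mathcal{S}_n)=\sum_{j=0}^{d-2}\binom{n+1}{j}\beta_{i-j,d-j}(G)+\sum_{j=0}^{d-2}\binom{m}{j}\beta_{i-j,d-j}(\mathcal{S}_n),$$
with the extra term $\sum_{j=1}^{d-1}\binom{m}{j}\binom{n+1}{d-j}$ added when $d=i+1$. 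When $d\neq i+1$, the pair $(i-j,d-j)$ never satisfies $d-j=(i-j)+1$, so the second sum vanishes identically and we immediately obtain the first case of the statement.

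When $d=i+1$, I would substitute $\beta_{i-j,d-j}(\mathcal{S}_n)=\binom{n}{i-j}$ in the second sum and note that the bound becomes $d-2=i-1$. The two remaining contributions are then
$$\sum_{j=0}^{i-1}\binom{m}{j}\binom{n}{i-j}\quad\text{and}\quad\sum_{j=1}^{i}\binom{m}{j}\binom{n+1}{i+1-j}.$$
Applying Vandermonde's identity, the first sum equals $\binom{m+n}{i}-\binom{m}{i}$ (extending the range to $j=i$ and subtracting the missing term). The second sum equals $\binom{m+n+1}{i+1}-\binom{n+1}{i+1}-\binom{m}{i+1}$ (extending to $j=0$ and $j=i+1$ and subtracting the two boundary terms).

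Adding these together gives
$$\binom{m+n+1}{i+1}+\binom{m+n}{i}-\binom{n+1}{i+1}-\Bigl(\binom{m}{i}+\binom{m}{i+1}\Bigr),$$
and Pascal's identity $\binom{m}{i}+\binom{m}{i+1}=\binom{m+1}{i+1}$ delivers the stated formula. There is no serious obstacle here; the only bookkeeping point requiring care is correctly tracking the summation ranges ($0\le j\le d-2$ versus $1\le j\le d-1$) so that the Vandermonde applications pick up exactly the right boundary corrections, and noticing that a Pascal collapse is needed to produce the $\binom{m+1}{i+1}$ term on the right-hand side.
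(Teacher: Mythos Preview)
Your proof is correct and follows essentially the same route as the paper: apply Corollary \ref{cor:prod} with $H=\mathcal{S}_n$, use Corollary \ref{star} to dispose of the $d\neq i+1$ case and to replace $\beta_{i-j,d-j}(\mathcal{S}_n)$ by $\binom{n}{i-j}$ when $d=i+1$, and then evaluate the two resulting sums via Vandermonde. The only difference is that you make explicit the final Pascal collapse $\binom{m}{i}+\binom{m}{i+1}=\binom{m+1}{i+1}$, which the paper leaves to the reader.
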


\begin{proof} In view of Corollary \ref{star}, it suffices to prove the formula in the case where $d=i+1$. Since $\beta_{i-j,i-j+1}(\mathcal{S}_n)=\binom{n}{i-j}$, using Corollary \ref{cor:prod} we get
\begin{equation*}\begin{split}
\beta_{i,d}(G*\mathcal{S}_n)=\sum_{j=0}^{d-2}\binom{n+1}{j}\beta_{i-j,d-j}(G)+\sum_{j=0}^{d-2}\binom{m}{j}\binom{n}{i-j}
+\sum_{j=1}^{i}\binom{m}{j}\binom{n+1}{i-j+1}.
\end{split}\end{equation*}
To complete the proof it is enough to notice that
$$\sum_{j=0}^{i-1}\binom{m}{j}\binom{n}{i-j}=\binom{m+n}{i}-\binom{m}{i},$$
and
$$\sum_{j=1}^{i}\binom{m}{j}\binom{n+1}{i-j+1}=\binom{m+n+1}{i+1}-\binom{n+1}{i+1}-\binom{m}{i+1}.$$

\end{proof}

In the next result we determine the projective dimension of product of graphs.

\begin{cor} \label{PD} Let $G$ and $H$ be two simple graphs having $m$ and $n$ vertices, respectively. Then $\tn{pd}(R/I(G*H))=m+n-1$.
\end{cor}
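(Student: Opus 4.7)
The plan is to sandwich $\tn{pd}(R/I(G*H))$ between $m+n-1$ from above and from below; once both inequalities are established the corollary follows immediately. (We assume $m,n\geq 1$, as is implicit in the statement.)

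For the upper bound, I apply the Auslander--Buchsbaum formula, which gives $\tn{pd}(R/I(G*H)) = (m+n) - \tn{depth}(R/I(G*H))$, so it suffices to show $\tn{depth}(R/I(G*H)) \geq 1$. Since $I(G*H)$ is a squarefree monomial ideal, the associated primes of $R/I(G*H)$ coincide with the minimal primes $P_F = (x_i : x_i \notin F)$ indexed by facets $F$ of $\Delta_{G*H}$. Every singleton $\{x\}$ with $x \in V(G)\cup V(H)$ is an independent set of $G*H$, so no facet of $\Delta_{G*H}$ is empty, and the graded maximal ideal fails to be associated to $R/I(G*H)$. Hence the depth is at least $1$, which forces $\tn{pd}(R/I(G*H)) \leq m+n-1$.

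For the lower bound I exhibit a non-zero graded Betti number at the extremal position. Specialize Corollary \ref{cor:prod} to $i = m+n-1$ and $d = i+1 = m+n$: the first summand is a non-negative integer combination of Betti numbers of $G$ and $H$, hence contributes $\geq 0$; in the cross term $\sum_{j=1}^{m+n-1} \binom{m}{j}\binom{n}{m+n-j}$ the constraints $j \leq m$ and $m+n-j \leq n$ pin $j$ to the single value $j=m$, leaving the nonzero contribution $\binom{m}{m}\binom{n}{n}=1$. Therefore $\beta_{m+n-1,\,m+n}(G*H)\geq 1$, giving $\tn{pd}(R/I(G*H)) \geq m+n-1$, and combining this with the upper bound yields equality. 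No real obstacle arises here: Whieldon's formula, packaged as Corollary \ref{cor:prod}, already forces the binomial cross-term to survive at the top homological degree, and the matching depth bound is immediate from the Stanley--Reisner dictionary.
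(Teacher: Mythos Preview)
Your proof is correct and follows essentially the same approach as the paper. The lower bound argument is identical (isolating the surviving term $\binom{m}{m}\binom{n}{n}=1$ from Corollary~\ref{cor:prod}); for the upper bound the paper simply invokes the well-known vanishing $\beta_{|V(L)|,j}(L)=0$ for any simple graph $L$, which is exactly your Auslander--Buchsbaum/depth argument rephrased at the level of Betti numbers.
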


\begin{proof} One has
$$\beta_{m+n-1,m+n}(G*H)\geq \sum_{j=1}^{m+n-1}\binom{m}{j}\binom{n}{m+n-j}\geq \binom{m}{m}\binom{n}{n}=1.$$
On the other hand since $\beta_{i,j}(L)=0$ for any simple graph $L$ and $i=|V(L)|$, we get that $\beta_{m+n}(G*H)=0$. Therefore $\tn{pd}(R/I(G*H))=m+n-1$.
\end{proof}

Now we are going to determine the Castelnuovo-Mumford regularity of product of graphs.

\begin{cor} \label{LR} Let $G$ and $H$ be two simple graphs with disjoint vertex sets. Then $G*H$ has linear resolution if and only if $G$ and $H$ have.
\end{cor}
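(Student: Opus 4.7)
The plan is to read the criterion directly off the explicit formula in Corollary~\ref{cor:prod}. Recall that $R/I(L)$ has linear resolution exactly when $\beta_{i,j}(L)=0$ for every $i\geq 1$ and every $j\neq i+1$; equivalently, every nonzero graded Betti number of $R/I(L)$ apart from $\beta_{0,0}$ lies on the diagonal $j=i+1$.

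For the forward direction, I would fix $i\geq 1$ and $d\neq i+1$ and apply the first branch of Corollary~\ref{cor:prod}. The Betti numbers appearing in the sum are $\beta_{i-j,d-j}(G)$ and $\beta_{i-j,d-j}(H)$ for $0\leq j\leq d-2$, and a short case split on $i-j$ makes each vanish: when $i-j\geq 1$, the linearity hypothesis on $G$ (resp.\ $H$) forces $d-j=(i-j)+1$, i.e.\ $d=i+1$, which is excluded; when $i-j=0$, the range constraint $j\leq d-2$ gives $d-j\geq 2$, so $\beta_{0,d-j}=0$; and when $i-j<0$ the term is zero trivially. Hence $\beta_{i,d}(G*H)=0$ for all $d\neq i+1$, so $R/I(G*H)$ has linear resolution.

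For the converse, assume $R/I(G*H)$ has linear resolution. I would extract the $j=0$ summand from the same formula: for $i\geq 1$, $d\geq 2$, and $d\neq i+1$, the left-hand side of Corollary~\ref{cor:prod} vanishes, and since each summand on the right is a non-negative integer, the $j=0$ term $\beta_{i,d}(G)+\beta_{i,d}(H)$ itself vanishes; hence both summands do. The remaining cases $d\in\{0,1\}$ are automatic because $I(G)$ and $I(H)$ are generated in degree two. This yields the linearity of both $R/I(G)$ and $R/I(H)$.

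There is no genuine obstacle in this argument; the only matter requiring a moment of care is the interplay between the summation range $0\leq j\leq d-2$ in Corollary~\ref{cor:prod} and the vanishing patterns of edge-ideal Betti numbers, which is precisely what drives the case analysis in the forward direction. One could alternatively invoke the regularity proposition from the introduction and observe that linear resolution is equivalent to $\tn{reg}(R/I)\leq 1$, but since that proposition is proved only later in the paper, the direct route through Corollary~\ref{cor:prod} is the natural choice here.
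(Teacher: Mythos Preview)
Your proof is correct and follows essentially the same route as the paper: both arguments read the result directly off Corollary~\ref{cor:prod}, using that the $j=0$ summand is $\beta_{i,d}(G)+\beta_{i,d}(H)$ and that all summands are non-negative. The paper compresses the forward direction into the single observation that $(d-j)-(i-j)=d-i\neq 1$, so every summand in the first branch of the formula is an off-diagonal Betti number of $G$ or $H$; your explicit case split on the sign of $i-j$ makes this more transparent but is not logically different.
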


\begin{proof} First note that if $\beta_{i,j}(G)\neq 0$ or $\beta_{i,j}(H)\neq 0$ for some $i$ and some $j$, then $\beta_{i,j}(G*H)\neq 0$. Now $G*H$ has linear resolution if and only if  $\beta_{i,d}(G*H)=0$ for all $d\neq i+1$. This equality holds if and only if  $\beta_{i,d}(G)=\beta_{i,d}(H)= 0$  for all $d\neq i+1$, i.e. $G$ and $H$ have linear resolutions.
\end{proof}

\begin{prop} \label{reg} Let $G$ and $H$ be two simple graphs with disjoint vertex sets. Then
$$\tn{reg}(R/I(G*H))=\tn{max}\{\tn{reg}(R/I(G)),\tn{reg}(R/I(H))\}.$$
\end{prop}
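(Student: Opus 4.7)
The plan is to derive the equality directly from the explicit Betti-number formula in Corollary \ref{cor:prod}, supplementing it with Corollary \ref{LR} to resolve the linear-strand boundary case. I set $r := \tn{max}\{\tn{reg}(R/I(G)), \tn{reg}(R/I(H))\}$ and prove the two inequalities separately; the generic assumption is that at least one of $G, H$ has an edge, so $r \geq 1$.

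For the direction $\tn{reg}(R/I(G*H)) \geq r$, I would assume without loss of generality that $r = \tn{reg}(R/I(G))$ and choose $(i,d)$ with $\beta_{i,d}(G) \neq 0$ and $d - i = r$. When $r \geq 2$ one has $d \neq i+1$, so isolating the $j = 0$ summand in the first branch of Corollary \ref{cor:prod} yields $\beta_{i,d}(G*H) \geq \binom{n}{0}\beta_{i,d}(G) = \beta_{i,d}(G) \neq 0$, forcing $\tn{reg}(R/I(G*H)) \geq d - i = r$. When $r = 1$ both $G$ and $H$ have linear resolution, so by Corollary \ref{LR} the product $G*H$ does as well; since $G*H$ has edges, its regularity equals $1 = r$.

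For the reverse inequality, I pick $(i,d)$ with $\beta_{i,d}(G*H) \neq 0$ realizing $\tn{reg}(R/I(G*H))$. If $d \neq i+1$, the first branch of Corollary \ref{cor:prod} forces some $\beta_{i-j,d-j}(G)$ or $\beta_{i-j,d-j}(H)$ to be nonzero; since $(d-j)-(i-j) = d-i$, this gives $d - i \leq \tn{reg}(R/I(G))$ or $d - i \leq \tn{reg}(R/I(H))$, hence $d - i \leq r$. If instead $d = i+1$, then trivially $d - i = 1 \leq r$.

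The main obstacle is the extra binomial term $\sum_{j=1}^{d-1}\binom{m}{j}\binom{n}{d-j}$ appearing in the $d = i+1$ branch of Corollary \ref{cor:prod}: this term can create Betti numbers of $G*H$ with no origin in $G$ or $H$, threatening the upper bound. The key observation that dissolves this difficulty is that the extra term lives entirely in the linear strand $d - i = 1$, so it can only compete with $r$ when $r$ itself is at most $1$, and that boundary is handled uniformly by invoking Corollary \ref{LR}.
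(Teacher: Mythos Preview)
Your proof is correct and follows essentially the same approach as the paper: both arguments read the two inequalities off Corollary~\ref{cor:prod}, using non-negativity of Betti numbers for $\geq$ and the vanishing outside the linear strand for $\leq$, and both invoke Corollary~\ref{LR} to dispose of the boundary case. Your treatment is slightly more explicit in separating $r=1$ from $r\geq 2$ and in articulating why the extra binomial term is harmless, but the underlying logic is identical to the paper's.
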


\begin{proof} First assume $\tn{max}\{\tn{reg}(R/I(G)),\tn{reg}(R/I(H))\}=s$. In view of Corollary \ref{LR} we may assume $s>1$. Now suppose $\tn{reg}(R/I(G))=s$. It follows that $\beta_{i,i+s}(G)\neq 0$ for some $i$, which implies that $\beta_{i,i+s}(G*H)\neq 0$. Therefore $\tn{reg}(R/I(G*H))\geq s$. Also note that if there exists $d>s$ such that $\beta_{i,i+d}(G*H)\neq 0$, then $\beta_{i-j,i+d-j}(G)\neq 0$ or $\beta_{i-j,i+d-j}(H)\neq 0$ for some $0\leq j \leq i+d-2$, a contradiction.
\end{proof}

\begin{rem} Proposition \ref{reg} gives a procedure to construct a family of graphs with the property that the regularity of its elements is a given integer. Indeed suppose $s>0$ is an integer and assume $G$ is a simple graph with $\tn{reg}(R/I(G))=s$ (for example, choose $G$ as the cycle graph over $3s$ vertices (see \cite{J})). Then for any graph $H$ with $\tn{reg}(R/I(H))\leq s$ one has $\tn{reg}(R/I(G*H))=s$.
\end{rem}

It is easy to see that any set of pairwise $3$-disjoint edges of $G$ is also a set of pairwise $3$-disjoint edges of $G*H$ and hence $\tn{max}\{a(G),a(H)\}\leq a(G*H)$. Conversely, if $A$ is a set of pairwise $3$-disjoint edges of $G*H$, then $A\se E(G)$  or $A\se E(H)$, i.e. $A$ is a set of pairwise $3$-disjoint edges of $G$ or $H$. Therefore we have proved the next lemma.

\begin{lem} \label{a} Let $G$ and $H$ be two simple graphs with disjoint vertex sets. Then
$$a(G*H)=\tn{max}\{a(G),a(H)\}.$$
\end{lem}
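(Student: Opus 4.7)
The plan is to prove the two inequalities $\max\{a(G),a(H)\}\leq a(G*H)$ and $a(G*H)\leq\max\{a(G),a(H)\}$ separately, each by a direct verification from the definition of $3$-disjointness.

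For the first inequality, the key observation is that $G$ is the induced subgraph of $G*H$ on the vertex set $V(G)$: the edges of $G*H$ with both endpoints in $V(G)$ are exactly $E(G)$. Therefore for any two edges $e_1,e_2\in E(G)$, the induced subgraph of $G*H$ on their vertex set coincides with the induced subgraph of $G$ on that set, so $e_1$ and $e_2$ are $3$-disjoint in $G$ if and only if they are $3$-disjoint in $G*H$. Hence every pairwise $3$-disjoint set of edges of $G$ is a pairwise $3$-disjoint set in $G*H$, and the analogous statement holds for $H$; taking the larger of the two gives the bound.

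For the reverse inequality, I would take a pairwise $3$-disjoint set $A$ of edges of $G*H$ with $|A|\geq 2$ and show that $A\subseteq E(G)$ or $A\subseteq E(H)$. The idea is that any two edges of $G*H$ whose union is not contained in $E(G)$ or in $E(H)$ induce a connected subgraph on their vertex set, so such a pair cannot appear in $A$. This splits into three cases: (a) $e_1\in E(G)$ and $e_2\in E(H)$, in which case the four endpoints are joined in $G*H$ by $e_1$, $e_2$, and all four crossing edges between $\{u,v\}\subseteq V(G)$ and $\{x,y\}\subseteq V(H)$; (b) $e_1=\{x,y\}$ is a crossing edge with $x\in V(G)$, $y\in V(H)$, and $e_2\subseteq V(G)$ (or symmetrically $\subseteq V(H)$), in which case $y$ is joined by crossing edges to both endpoints of $e_2$ and also to $x$; (c) both $e_1$ and $e_2$ are crossing edges, in which case either they share a vertex (trivially violating $3$-disjointness) or their four endpoints induce a $K_{2,2}$. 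In every case connectivity is immediate, so $A$ cannot mix the two sides; this forces $|A|\leq\max\{a(G),a(H)\}$.

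There is no real obstacle here, as the whole argument is a short case analysis of how $3$-disjointness interacts with the complete bipartite structure imposed on $V(G)\times V(H)$ inside $G*H$. The only subtlety worth flagging is the trivial case $|A|\leq 1$, which is consistent with the conclusion as soon as at least one of $G,H$ carries an edge.
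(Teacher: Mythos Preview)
Your proof is correct and follows essentially the same two-inequality approach as the paper, which disposes of the lemma in two sentences just before its statement: pairwise $3$-disjoint sets in $G$ (or $H$) remain so in $G*H$, and conversely any pairwise $3$-disjoint set in $G*H$ must lie entirely in $E(G)$ or $E(H)$. Your case analysis merely spells out what the paper leaves as ``easy to see,'' and your remark about the $|A|\le 1$ case is in fact more careful than the paper, whose blanket claim that $A\subseteq E(G)$ or $A\subseteq E(H)$ is literally false for a singleton consisting of a crossing edge.
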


\begin{cor} \label{eqp} Let $G$ be a simple graph with $\tn{reg}(R/I(G))=a(G)$. Then for any graph $H$ with $\tn{reg}(R/I(H))\leq\tn{reg}(R/I(G))$ one has
$$\tn{reg}(R/I(G*H))=a(G*H).$$
\end{cor}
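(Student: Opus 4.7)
The plan is to simply chain together Proposition \ref{reg}, Lemma \ref{a}, and Katzman's bound (Theorem \ref{regT}). Everything needed is already in place, so the argument should be a short computation; I do not expect any real obstacle.

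First I would set $s = \tn{reg}(R/I(G)) = a(G)$ and use the hypothesis $\tn{reg}(R/I(H)) \leq s$ together with Proposition \ref{reg} to conclude
\[
\tn{reg}(R/I(G*H)) = \max\{\tn{reg}(R/I(G)),\tn{reg}(R/I(H))\} = s.
\]

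Next I would handle $a(G*H)$ via Lemma \ref{a}. Katzman's bound gives $a(H) \leq \tn{reg}(R/I(H)) \leq s = a(G)$, so
\[
a(G*H) = \max\{a(G),a(H)\} = a(G) = s.
\]
Combining these two equalities yields $\tn{reg}(R/I(G*H)) = a(G*H)$, which is exactly what we want. The only subtlety is remembering that we need Katzman's inequality applied to $H$ (not just the hypothesis on $G$) in order to compare $a(H)$ with $a(G)$; without it one only has $a(H) \leq a(G*H)$ trivially, not $a(H) \leq a(G)$.
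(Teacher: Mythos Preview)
Your proposal is correct and follows essentially the same approach as the paper's own proof: both use Katzman's bound (Theorem \ref{regT}) to obtain $a(H)\leq \tn{reg}(R/I(H))\leq \tn{reg}(R/I(G))=a(G)$, then apply Proposition \ref{reg} and Lemma \ref{a} to conclude $\tn{reg}(R/I(G*H))=a(G)=a(G*H)$. The only difference is presentational --- you spell out the intermediate value $s$ and the order of the steps, while the paper compresses everything into two lines.
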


\begin{proof} It follows from Theorem \ref{regT} that $a(G)=\tn{reg}(R/I(G))\geq \tn{reg}(R/I(H))\geq a(H)$. Now using Proposition \ref{reg} and Lemma \ref{a} we get
$$\tn{reg}(R/I(G*H))=\tn{reg}(R/I(G))=a(G)=a(G*H).$$
\end{proof}

Let $\mathcal{A}$ be the set of all graphs whose regularity equal the maximum number of pairwise 3-disjoint edges, i.e.
$$\mathcal{A}=\{~G~ ~ ~|~ ~ ~ G ~\tn{is a simple graph with } \tn{reg}(R/I(G))=a(G)\}.$$
There has been several attempts to determine elements of $\mathcal{A}$.
Zheng \cite{Z} proved that trees belong to $\mathcal{A}$.
Francisco, H\`a and Van Tuyl \cite{FHVT} proved $\mathcal{A}$ contains Cohen-Macaulay bipartite graphs. Van Tuyl \cite{VT} generalized this to the family of sequentially Cohen-Macaulay bipartite graphs. Note that a tree is a sequentially Cohen-Macaulay bipartite graph (see \cite{F}). Kummini \cite{Ku} proved that $\mathcal{A}$ contains unmixed bipartite graphs. Also the authors in \cite{MMCRTY} generalized Kummini's result to the class of very well-covered graphs. In addition, $\mathcal{A}$ contains all cycles $\CC_m$ that $m\equiv 0$ and $1$ mod $3$ (see \cite{J}). Using Corollary \ref{eqp} we have the following.

\begin{prop} \label{A} Let $\mathcal{A}$ be the set of all graphs $G$ with the property $\tn{reg}(R/I(G))=a(G)$. Then $\mathcal{A}$ is closed under product of graphs, i.e. for any $G,H\in \mathcal{A}$ one has
$$\tn{reg}(R/I(G*H))=a(G*H).$$
\end{prop}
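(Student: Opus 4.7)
The plan is essentially a one-line combination of two results that have already been established in this section. Proposition \ref{reg} yields $\tn{reg}(R/I(G*H)) = \tn{max}\{\tn{reg}(R/I(G)),\tn{reg}(R/I(H))\}$, and Lemma \ref{a} yields $a(G*H) = \tn{max}\{a(G),a(H)\}$. Since $G,H \in \mathcal{A}$ means $\tn{reg}(R/I(G)) = a(G)$ and $\tn{reg}(R/I(H)) = a(H)$, substituting the $a$-values for the regularities inside the first maximum produces exactly the right-hand side of the second, which is $a(G*H)$.

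Alternatively, and perhaps more cleanly, I would invoke Corollary \ref{eqp} directly. Without loss of generality assume $\tn{reg}(R/I(G)) \geq \tn{reg}(R/I(H))$. Since $G \in \mathcal{A}$, the hypotheses of Corollary \ref{eqp} are satisfied with this choice of $G$, giving $\tn{reg}(R/I(G*H)) = a(G*H)$ immediately. One can also cross-check consistency using Theorem \ref{regT}: the chain $a(H) \leq \tn{reg}(R/I(H)) \leq \tn{reg}(R/I(G)) = a(G)$ confirms that the maximum on the $a$-side agrees with the maximum on the regularity-side, so no ordering issue arises.

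The main effort for this statement has already been invested upstream: Proposition \ref{reg} required the Betti number decomposition from Corollary \ref{cor:prod} (and in turn Lemma \ref{lem:un}, proved via Hochster's formula applied to the disjoint union $\Delta_{G*H} = \Delta_G \cup \Delta_H$ of Lemma \ref{lem:disj}), while Lemma \ref{a} was a direct combinatorial observation about how $3$-disjointness forces both endpoints of a selected edge to lie on the same side of $V(G) \sqcup V(H)$. Consequently, there is no substantive obstacle remaining: the closure of $\mathcal{A}$ under the product operation is a formal consequence of the fact that both $\tn{reg}(R/I(-))$ and $a(-)$ behave as a maximum under $*$, combined with the defining hypothesis that the two invariants coincide for $G$ and $H$ separately.
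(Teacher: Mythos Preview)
Your proposal is correct and matches the paper's approach exactly: the paper simply states that Proposition~\ref{A} follows from Corollary~\ref{eqp}, which is precisely your second argument (choose the graph with larger regularity to play the role of $G$ in Corollary~\ref{eqp}). Your first argument, combining Proposition~\ref{reg} with Lemma~\ref{a} directly, is just the content of Corollary~\ref{eqp} unpacked, so there is no meaningful difference.
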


\begin{rem} Using Proposition \ref{A} one can construct graphs that do not belong to the above mentioned families but their regularity equal the maximum number of pairwise 3-disjoint edges. For example consider $\CC_3*\CC_4$. One can easily see that this graph is not bipartite, not unmixed, not sequentially Cohen-Macaulay, and not very well-covered, but its regularity equals the maximum number of pairwise 3-disjoint edges which is 1.
\end{rem}

Let $G$ and $H$ be two simple graphs with disjoint vertex sets $X$ and $Y$, respectively. It is easy to see that minimal vertex covers of $G*H$ are of the following forms:
\begin{itemize}
\item[(1)] $A\cup Y$, where $A$ is a minimal vertex cover of $G$,
\item[(2)] $X\cup B$, where $B$ is a minimal vertex cover of $H$.
\end{itemize}
It follows that the cover ideal $I(G*H)^\vee$ of $G*H$ can be written as
$$I(G*H)^\vee=XI(H)^\vee +YI(G)^\vee.$$
Therefore we have the following.

\begin{cor} Let $G$ and $H$ be two simple graphs with disjoint vertex sets $X$ and $Y$ having $m$ and $n$ vertices, respectively. Then
\begin{itemize}
\item[(i)] $\tn{pd}(XI(H)^\vee +YI(G)^\vee) =\tn{max}\{\tn{pd}(I(G)^\vee),\tn{pd}(I(H)^\vee)\},$
\item[(ii)] $\tn{reg}(XI(H)^\vee +YI(G)^\vee)=m+n-1.$
\end{itemize}
\end{cor}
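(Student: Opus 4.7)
The paragraph immediately preceding the corollary establishes that $I(G*H)^\vee = XI(H)^\vee + YI(G)^\vee$, so both (i) and (ii) are really statements about invariants of the single cover ideal $I(G*H)^\vee$. The whole proof will just combine Terai-type duality with results already proved in the paper; no new combinatorics is needed.

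For part (i), I would apply Lemma \ref{lem:dual} three times, to $G$, $H$, and $G*H$, rewriting each $\tn{pd}(I(-)^\vee)$ as $\tn{reg}(R/I(-))$. The identity (i) then becomes
$$\tn{reg}(R/I(G*H)) = \max\{\tn{reg}(R/I(G)),\tn{reg}(R/I(H))\},$$
which is exactly Proposition \ref{reg}. So (i) is a pure translation under Terai duality.

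For part (ii), the plan is to use the ``dual'' form of Lemma \ref{lem:dual}, namely
$$\tn{reg}(I^\vee) = \tn{pd}(R/I),$$
valid for any nonzero squarefree monomial ideal $I$. This is the other half of Terai's theorem from \cite{T}; it is equivalent to the form stated in Lemma \ref{lem:dual} via the standard shifts $\tn{reg}(J) = \tn{reg}(R/J)+1$ and $\tn{pd}(J) = \tn{pd}(R/J)-1$ for a nonzero ideal $J$, combined with the involution $(I^\vee)^\vee = I$. Applied to $I = I(G*H)$ it gives $\tn{reg}(I(G*H)^\vee) = \tn{pd}(R/I(G*H))$, and the right-hand side equals $m+n-1$ by Corollary \ref{PD}.

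The only step that needs a short justification is the dual form $\tn{reg}(I^\vee) = \tn{pd}(R/I)$, but this is a well-known reformulation of Terai duality following from the same Betti number symmetry $\beta_{i,j}(I) = \beta_{n-j,\,n-i}(I^\vee)$ that underlies Lemma \ref{lem:dual}. Once this is in hand, both parts reduce to previously proved results (Proposition \ref{reg} for (i) and Corollary \ref{PD} for (ii)), so there is no real obstacle.
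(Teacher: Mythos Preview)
Your proposal is correct and matches the paper's own proof essentially line for line: part (i) is obtained by applying Lemma \ref{lem:dual} to $G$, $H$, and $G*H$ and then invoking Proposition \ref{reg}, and part (ii) is obtained from the dual Terai identity $\tn{reg}(I(G)^\vee)=\tn{pd}(R/I(G))$ together with Corollary \ref{PD}. If anything, you give a bit more justification for the dual form than the paper does (the paper simply asserts it follows from Lemma \ref{lem:dual}).
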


\begin{proof} (i) Using Lemma \ref{lem:dual} and Proposition \ref{reg} one has
\begin{equation*}\begin{split}
\tn{pd}(I(G*H)^\vee)&=\tn{reg}(R/I(G*H))\\
&=\tn{max}\{\tn{reg}(R/I(G)),\tn{reg}(R/I(H))\}\\
&=\tn{max}\{\tn{pd}(I(G)^\vee),\tn{pd}(I(H)^\vee)\}.
\end{split}\end{equation*}
(ii) It follows from Lemma \ref{lem:dual} that for any graph $G$ we have $\tn{reg}(I(G)^\vee)=\tn{pd}(R/I(G))$, which together with Corollary \ref{PD} completes the proof.
\end{proof}

\section{$h$-vectors and Hilbert series}
In this section we first compute $h$-vector of disjoint union of two simplicial complexes and then using that we will provide an explicit formula to compute Hilbert series of a simplical complex in term of Hilbert series of its connected components. We begin this section with the next result on the $h$-vector of disjoint union of two simplicial complexes with the same dimension and then we generalize our result to the case where the dimensions are not equal.

\begin{lem} \label{eqdim} Let $\Delta$ and $\Delta'$be two simplicial complexes with disjoint vertex sets, and let $\tn{dim}(\Delta)=\tn{dim}(\Delta')=d$. Then
$$h_k(\Delta\cup\Delta')=h_k(\Delta)+h_k(\Delta')-(-1)^k\binom{d+1}{k},$$
for all $0\leq k \leq d+1$.
\end{lem}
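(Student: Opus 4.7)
The plan is to use the explicit formula
$$h_k=\sum_{i=0}^k(-1)^{k-i}\binom{d+1-i}{k-i}f_{i-1}$$
given in the introduction, which converts a statement about $h$-vectors into a statement about $f$-vectors. Since $\dim(\Delta\cup\Delta')=d$ as well, the same value of $d+1$ appears in every binomial coefficient on both sides, so the only thing to track is how the $f$-vector changes under disjoint union.

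First I would analyze how faces behave under disjoint union. Because the vertex sets of $\Delta$ and $\Delta'$ are disjoint, every nonempty face of $\Delta\cup\Delta'$ lies entirely in exactly one of the two complexes, giving $f_i(\Delta\cup\Delta')=f_i(\Delta)+f_i(\Delta')$ for $i\ge 0$. The only subtlety is the empty face: $\emptyset$ is counted once in each complex and once in the union, so $f_{-1}(\Delta\cup\Delta')=1$ while $f_{-1}(\Delta)+f_{-1}(\Delta')=2$. Thus if I naively replace $f_{i-1}$ of the union by the sum, I have introduced one extra copy of the $i=0$ term, which must be subtracted off.

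Concretely, I would write
$$h_k(\Delta\cup\Delta')=(-1)^k\binom{d+1}{k}f_{-1}(\Delta\cup\Delta')+\sum_{i=1}^{k}(-1)^{k-i}\binom{d+1-i}{k-i}\bigl(f_{i-1}(\Delta)+f_{i-1}(\Delta')\bigr),$$
separate out the $i=0$ contributions in $h_k(\Delta)+h_k(\Delta')$ as well, namely
$$h_k(\Delta)+h_k(\Delta')=2(-1)^k\binom{d+1}{k}+\sum_{i=1}^{k}(-1)^{k-i}\binom{d+1-i}{k-i}\bigl(f_{i-1}(\Delta)+f_{i-1}(\Delta')\bigr),$$
and subtract. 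The sums over $i\ge 1$ cancel and we are left with
$$h_k(\Delta\cup\Delta')-\bigl(h_k(\Delta)+h_k(\Delta')\bigr)=(-1)^k\binom{d+1}{k}-2(-1)^k\binom{d+1}{k}=-(-1)^k\binom{d+1}{k},$$
which is exactly the claimed identity.

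The only thing to be careful about is the bookkeeping of the empty face (this is where the $-(-1)^k\binom{d+1}{k}$ correction really comes from), but once that is done correctly the proof is just a matter of splitting the defining sum at $i=0$. There is no real obstacle here; the equality of dimensions ensures that the shifting factors $\binom{d+1-i}{k-i}$ line up without needing the more delicate analysis that will be required for the general case treated in the next proposition, where $\dim(\Delta)\ne\dim(\Delta')$ forces one to pad the lower-dimensional $h$-vector via an additional alternating convolution.
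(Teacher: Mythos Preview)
Your proof is correct and follows essentially the same approach as the paper's: both use the formula $h_k=\sum_{i=0}^k(-1)^{k-i}\binom{d+1-i}{k-i}f_{i-1}$, observe that $f_i(\Delta\cup\Delta')=f_i(\Delta)+f_i(\Delta')$ for $i\ge 0$ while $f_{-1}=1$ on both sides, and conclude that the only correction is the single overcounted $i=0$ term $(-1)^k\binom{d+1}{k}$. The paper presents this slightly more compactly by writing the full sum and subtracting the extra $i=0$ contribution in one line, whereas you explicitly split off the $i=0$ term before comparing, but the argument is the same.
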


\begin{proof} Note that
\begin{gather*}
f_i(\Delta\cup\Delta')=
\begin{cases}
1 &{~\tn{ if }~ i=-1}\\
\\
f_i(\Delta)+f_i(\Delta') &{~\tn{ if }~ i\neq -1}.
\end{cases}
\end{gather*}
Therefore we have
\begin{equation*}\begin{split}
h_k(\Delta\cup\Delta')&=\sum_{i=0}^k (-1)^{k-i}\binom{d+1-i}{k-i}f_{i-1}(\Delta\cup\Delta')\\
&=\sum_{i=0}^k (-1)^{k-i}\binom{d+1-i}{k-i}f_{i-1}(\Delta)\\
&\qquad +\sum_{i=0}^k (-1)^{k-i}\binom{d+1-i}{k-i}f_{i-1}(\Delta')-(-1)^k\binom{d+1}{k}\\
&=h_k(\Delta)+h_k(\Delta')-(-1)^k\binom{d+1}{k}.
\end{split}\end{equation*}
\end{proof}

To generalize Lemma \ref{eqdim} to the case where the dimensions are not equal, we need the next result.

\begin{lem} \label{cone} Let $\Delta'$ be a simplicial complex with dimension $d-1$. Then
$$h(\tn{cone}(\Delta'))=(h(\Delta'),0).$$
\end{lem}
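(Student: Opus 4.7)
The plan is to deduce the $h$-vector identity from a Hilbert series identity, which seems much cleaner than a direct combinatorial manipulation of the $f$-vector formula. The central observation I would use is that $\tn{cone}(\Delta')=\Delta'*\{w\}$ simply adjoins a vertex $w$ that lies in every maximal face, so no minimal non-face of $\tn{cone}(\Delta')$ involves $w$. Consequently, writing $R=\mathbb{K}[x_1,\ldots,x_n]$ for the polynomial ring on $V(\Delta')$, the Stanley-Reisner ideal of $\tn{cone}(\Delta')$ inside $R[w]$ is just the extension of $I_{\Delta'}$, which gives the ring identification
$$\mathbb{K}[\tn{cone}(\Delta')]\;\cong\;\mathbb{K}[\Delta'][w].$$
Since adjoining a polynomial variable multiplies the Hilbert series by $1/(1-t)$, this immediately yields
$$H_{\mathbb{K}[\tn{cone}(\Delta')]}(t)=\frac{1}{1-t}\,H_{\mathbb{K}[\Delta']}(t).$$

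Next I would match this against the standard presentation of Hilbert series in terms of $h$-vectors recalled in Section 2. With $\tn{dim}(\Delta')=d-1$ and $\tn{dim}(\tn{cone}(\Delta'))=d$, one has
$$H_{\mathbb{K}[\Delta']}(t)=\frac{\sum_{i=0}^{d}h_i(\Delta')\,t^i}{(1-t)^{d}},\qquad H_{\mathbb{K}[\tn{cone}(\Delta')]}(t)=\frac{\sum_{i=0}^{d+1}h_i(\tn{cone}(\Delta'))\,t^i}{(1-t)^{d+1}}.$$
Substituting into the displayed Hilbert series identity and clearing the common factor $(1-t)^{d+1}$ reduces everything to the polynomial equality
$$\sum_{i=0}^{d+1}h_i(\tn{cone}(\Delta'))\,t^i=\sum_{i=0}^{d}h_i(\Delta')\,t^i.$$
Comparing coefficients of $t^i$ for $0\leq i\leq d$ and of $t^{d+1}$ then gives exactly $h(\tn{cone}(\Delta'))=(h(\Delta'),0)$, which is the claim.

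I do not anticipate any serious obstacle; the only point that requires care is the ring identification $\mathbb{K}[\tn{cone}(\Delta')]\cong\mathbb{K}[\Delta'][w]$, which has to be justified from the fact that the faces of $\tn{cone}(\Delta')$ are precisely the sets $F$ and $F\cup\{w\}$ for $F\in\Delta'$, so the minimal non-faces do not involve $w$. If one preferred to avoid invoking the Stanley-Reisner ring, an alternative purely combinatorial route would be to compute $f_i(\tn{cone}(\Delta'))=f_i(\Delta')+f_{i-1}(\Delta')$ for $i\geq 0$, substitute into the formula $h_k=\sum_{i=0}^{k}(-1)^{k-i}\binom{d+1-i}{k-i}f_{i-1}$, split the resulting sum using Pascal's identity $\binom{d+1-i}{k-i}=\binom{d-i}{k-i}+\binom{d-i}{k-i-1}$, and re-index; but the Hilbert series argument above renders this unnecessary.
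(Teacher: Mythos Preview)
Your proof is correct, but it proceeds along a genuinely different route from the paper's. The paper carries out precisely the ``alternative purely combinatorial route'' you sketch at the end: it writes $f_{i-1}(\tn{cone}(\Delta'))=g_{i-1}+g_{i-2}$ where $g_i=f_i(\Delta')$, substitutes into $h_k=\sum_{i=0}^k(-1)^{k-i}\binom{d+1-i}{k-i}f_{i-1}$, splits via Pascal's identity $\binom{d+1-i}{k-i}=\binom{d-i}{k-i}+\binom{d-i}{k-i-1}$, and re-indexes to obtain $h_k(\tn{cone}(\Delta'))=h_k(\Delta')$ for $k\le d$; the case $k=d+1$ is then checked by a separate telescoping computation. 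Your Hilbert-series argument is shorter and conceptually cleaner: the identification $\mathbb{K}[\tn{cone}(\Delta')]\cong\mathbb{K}[\Delta'][w]$ (which you justify correctly from the description of faces of the cone) yields the factor $1/(1-t)$ in one stroke, and comparison of numerators finishes the job. In the paper's logical flow this Hilbert-series identity appears only \emph{after} the lemma, as a remark following Corollary~\ref{*[n]}; your approach effectively reverses that order, which is permissible since the ring identification and the formula $H_{\mathbb{K}[\Delta]}(t)=\sum h_it^i/(1-t)^{d+1}$ from Section~2 are independent of the lemma, so no circularity arises. The paper's method has the modest virtue of staying entirely within $f$- and $h$-vector combinatorics without invoking the Stanley--Reisner ring, while yours trades binomial-coefficient bookkeeping for a single structural observation.
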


\begin{proof} Assume $f(\Delta')=(g_0,g_1,\ldots,g_{d-1})$. It is easy to see that
$$f(\tn{cone}(\Delta'))=(g_0+g_{-1},g_1+g_0,\ldots,g_{d-1}+g_{d-2},g_d+g_{d-1}),$$
where $g_d=0$. Hence for $k\leq d$ one has
\begin{equation*}\begin{split}
h_k(\tn{cone}(\Delta'))&=\sum_{i=0}^k (-1)^{k-i}\binom{d+1-i}{k-i}f_{i-1}(\tn{cone}(\Delta'))\\
&=\sum_{i=0}^k (-1)^{k-i}\binom{d+1-i}{k-i}g_{i-1}+\sum_{i=0}^k (-1)^{k-i}\binom{d+1-i}{k-i}g_{i-2}.
\end{split}\end{equation*}
Using identity $\binom{d+1-i}{k-i}=\binom{d-i}{k-i}+\binom{d-i}{k-i-1}$, the first sum above can be written as
$\sum_{i=0}^k (-1)^{k-i}\binom{d-i}{k-i}g_{i-1}+\sum_{i=0}^k (-1)^{k-i}\binom{d-i}{k-i-1}g_{i-1}.$
Note that if $i=k$, then $\binom{d-i}{k-i-1}=0$ and hence
$$\sum_{i=0}^k (-1)^{k-i}\binom{d+1-i}{k-i}g_{i-1}=h_k(\Delta')-\sum_{i=0}^{k-1} (-1)^{k-i-1}\binom{d-i}{k-i-1}g_{i-1}.$$
On the other hand, since $g_{-2}=0$, by setting $j=i-1$ in $\sum_{i=1}^k (-1)^{k-i}\binom{d+1-i}{k-i}g_{i-2}$, this term can be written as $\sum_{j=0}^{k-1} (-1)^{k-j-1}\binom{d-j}{k-j-1}g_{j-1}$ which implies $h_k(\tn{cone}(\Delta'))=h_k(\Delta')$ for all $k\leq d$.\\
To complete the proof note that
\begin{equation*}\begin{split}
h_{d+1}(\tn{cone}(\Delta'))&=\sum_{i=0}^{d+1}(-1)^{d+1-i}\binom{d+1-i}{d+1-i}g_{i-1}
+\sum_{i=0}^{d+1}(-1)^{d+1-i}\binom{d+1-i}{d+1-i}g_{i-2}\\
&=\sum_{i=0}^{d}(-1)^{d+1-i}g_{i-1}+\sum_{i=1}^{d+1}(-1)^{d+1-i}g_{i-2}\\
&=\sum_{i=0}^{d}(-1)^{d+1-i}g_{i-1}+\sum_{j=0}^{d}(-1)^{d-j}g_{j-1}=0.
\end{split}\end{equation*}
\end{proof}

For $n\in \mathbb{N}$, let $[n]$ denotes the full simplex over $n$ vertices. Applying this notation one has $\tn{cone}(\Delta')=\Delta'*[1]$, where $(*)$ denotes the join of simplicial complexes. Therefore in Lemma \ref{cone} we have proved if $h(\Delta')=(h_0,h_1,\ldots,h_d)$, then $h(\Delta'*[1])=(h_0,h_1,\ldots,h_d,0)$. Using induction on $n$ the next corollary is straightforward.

\begin{cor} \label{*[n]} Let $\Delta'$ be a simplicial complex with dimension $d-1$ whose $h$-vector is $h(\Delta')=(h_0,h_1,\ldots,h_d)$. If $[n]$ denotes the full simplex over $n$ vertices, then
\begin{gather*}
h(\Delta'*[n])=(h_0,h_1,\ldots,h_d,\overbrace{0,0,\ldots,0}^{n-times}).
\end{gather*}
\end{cor}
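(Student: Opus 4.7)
The plan is to induct on $n$, with Lemma \ref{cone} providing both the base case and the inductive engine. For $n=1$, the claim is exactly Lemma \ref{cone}, since $\Delta'*[1]=\tn{cone}(\Delta')$ and the lemma gives $h(\tn{cone}(\Delta'))=(h(\Delta'),0)$, which is $(h_0,h_1,\ldots,h_d,0)$ as desired.

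For the inductive step, I would first observe that the join operation is associative and that $[n]=[n-1]*[1]$ (realize $[n]$ on a vertex set $V_{n-1}\sqcup\{w\}$, with $V_{n-1}$ carrying $[n-1]$ and $\{w\}$ carrying $[1]$). This gives the key identity
$$\Delta'*[n]=\Delta'*[n-1]*[1]=\tn{cone}\bigl(\Delta'*[n-1]\bigr).$$
Applying the induction hypothesis to $\Delta'*[n-1]$, which has dimension $(d-1)+(n-1)=d+n-2$, its $h$-vector is
$$h(\Delta'*[n-1])=(h_0,h_1,\ldots,h_d,\underbrace{0,\ldots,0}_{n-1\text{ times}}),$$
a sequence of length $d+n$. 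Then Lemma \ref{cone}, applied now with ``$d$'' replaced by $d+n-1$, appends one further zero to this $h$-vector to yield $h(\Delta'*[n])$. This produces the required $n$ trailing zeros.

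No serious obstacle arises; the only thing that needs attention is consistent bookkeeping of the dimension shift so that Lemma \ref{cone} is invoked with the correct value of $d$ at each step. Since Lemma \ref{cone} is formulated in a way that applies to any simplicial complex, this bookkeeping is purely formal, and the induction goes through cleanly.
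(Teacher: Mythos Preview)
Your proof is correct and is exactly the approach the paper takes: the paper simply remarks that $\tn{cone}(\Delta')=\Delta'*[1]$ and that the corollary then follows by induction on $n$, and you have spelled out that induction in full. The bookkeeping you do (associativity of join, $[n]=[n-1]*[1]$, and the dimension shift when reapplying Lemma~\ref{cone}) is precisely what the paper leaves implicit.
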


\begin{rem} It follows from Corollary \ref{*[n]} that if $H_{\mathbb{K}[\Delta']}(t)=Q(t)/(1-t)^d$ is the Hilbert series of $\mathbb{K}[\Delta']$, where $\tn{dim}(\Delta')=d-1$, then for all $n\in \mathbb{N}$, the Hilbert series of $\mathbb{K}[\Delta'*[n]]$ is $$H_{\mathbb{K}[\Delta'*[n]]}(t)=\frac{Q(t)}{(1-t)^{d+n}}=\frac{H_{\mathbb{K}[\Delta']}(t)}{(1-t)^{n}}.$$
\end{rem}

To investigate the $h$-vector of disjoint union of complexes with different dimensions we need the following lemma.

\begin{lem} \label{sum} Let $1\leq p\leq n$ be integers. Then
$$\sum_{t=1}^p(-1)^{p-t+1}\binom{n}{t}\binom{n-t}{p-t}=(-1)^p\binom{n}{p}.$$
\end{lem}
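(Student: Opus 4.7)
The plan is to recognize that this is essentially an alternating binomial identity in disguise, which becomes transparent after applying the subset-of-a-subset identity.

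First I would rewrite the product of binomial coefficients using
\[
\binom{n}{t}\binom{n-t}{p-t}=\binom{n}{p}\binom{p}{t},
\]
which holds because both sides count the number of ways to choose an ordered pair $(T,P)$ with $T\subseteq P\subseteq\{1,\dots,n\}$, $|T|=t$, $|P|=p$. Substituting this into the sum and pulling $\binom{n}{p}$ out of the summation, the identity to prove reduces to
\[
\binom{n}{p}\sum_{t=1}^{p}(-1)^{p-t+1}\binom{p}{t}=(-1)^p\binom{n}{p},
\]
so it suffices to check that $\sum_{t=1}^{p}(-1)^{p-t+1}\binom{p}{t}=(-1)^p$.

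For this last step I would appeal to the standard identity $(1-1)^p=\sum_{t=0}^{p}(-1)^{p-t}\binom{p}{t}=0$ (valid for $p\geq 1$, and the hypothesis $p\geq 1$ is given). Isolating the $t=0$ term gives $\sum_{t=1}^{p}(-1)^{p-t}\binom{p}{t}=-(-1)^p$, hence
\[
\sum_{t=1}^{p}(-1)^{p-t+1}\binom{p}{t}=(-1)^p,
\]
and multiplying back by $\binom{n}{p}$ finishes the proof.

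There is no real obstacle here: the only move that requires any thought is spotting the subset-of-a-subset rewriting, after which the identity collapses to the vanishing of the alternating row sum of Pascal's triangle. The hypothesis $1\leq p\leq n$ is needed only to guarantee $\binom{n}{p}$ makes sense and that $p\geq 1$ so that $(1-1)^p=0$.
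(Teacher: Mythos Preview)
Your proof is correct and is genuinely different from the paper's. The paper argues by induction on $n$, observing that
\[
\binom{n+1}{t}\binom{n+1-t}{p-t}=\frac{n+1}{n-p+1}\binom{n}{t}\binom{n-t}{p-t},
\]
so the entire sum for $n+1$ is $\frac{n+1}{n-p+1}$ times the sum for $n$, and since $(-1)^p\binom{n+1}{p}=\frac{n+1}{n-p+1}(-1)^p\binom{n}{p}$ the induction step follows. Your route instead applies the subset-of-a-subset identity $\binom{n}{t}\binom{n-t}{p-t}=\binom{n}{p}\binom{p}{t}$ to factor out $\binom{n}{p}$ immediately, reducing everything to the vanishing of the alternating row sum $\sum_{t=0}^{p}(-1)^{t}\binom{p}{t}=0$. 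Your argument is shorter and more conceptual: it explains \emph{why} the right-hand side is $(-1)^p\binom{n}{p}$ rather than merely verifying it, and it avoids induction altogether. The paper's approach, by contrast, requires spotting a slightly less standard ratio identity and carrying an induction, though it has the minor virtue of not needing to recall the subset-of-a-subset formula.
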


\begin{proof} We proceed by induction on $n$. If $n=1$, there is nothing to prove. So assume $n>1$ and the assertion is valid for $n$. We prove the assertion for $n+1$. Since
$$\binom{n+1}{t}\binom{n+1-t}{p-t}=\frac{n+1}{n-p+1}\binom{n}{t}\binom{n-t}{p-t},$$
we have
\begin{equation*}\begin{split}
\sum_{t=1}^p (-1)^{p-t+1}\binom{n+1}{t}\binom{n+1-t}{p-t}&=\sum_{t=1}^p(-1)^{p-t+1}\frac{n+1}{n-p+1}\binom{n}{t}\binom{n-t}{p-t}\\
&=\frac{n+1}{n-p+1}(-1)^p\binom{n}{p}=(-1)^p\binom{n+1}{p}.
\end{split}\end{equation*}
\end{proof}

Now we are ready to compute $h$-vector of $\Delta\cup\Delta'$ in terms of $h$-vectors of $\Delta$ and $\Delta'$, where $\tn{dim}(\Delta)\neq \tn{dim}(\Delta')$. Recall that by definition, for any simplicial complex $\Delta$ of dimension $d$ we have $h_k(\Delta)=0$ for $k<0$ and also for $k>d+1$.

\begin{prop} \label{neqdim} Let $\Delta$ and $\Delta'$ be two simplicial complexes with disjoint vertex sets. Also let $\tn{dim}(\Delta')\leq \tn{dim}(\Delta)$, and assume $n=\tn{dim}(\Delta)-\tn{dim}(\Delta')$. Then
$$h_k(\Delta\cup\Delta')=h_k(\Delta)+\sum_{p=0}^{n}(-1)^p\binom{n}{p}h_{k-p}(\Delta')-(-1)^k\binom{d+1}{k},$$
for all $0\leq k \leq d+1$, where $d=\tn{dim}(\Delta)$.
\end{prop}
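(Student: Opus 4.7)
The plan is to imitate the proof of Lemma \ref{eqdim} while using Vandermonde's identity to bridge the mismatch between $\tn{dim}(\Delta\cup\Delta')=d$ and $\tn{dim}(\Delta')=d-n$. The setup is identical: one has $f_i(\Delta\cup\Delta')=f_i(\Delta)+f_i(\Delta')$ for $i\geq 0$, together with $f_{-1}(\Delta\cup\Delta')=1$, so expanding the defining formula of $h_k$ with respect to $d$ immediately yields
\begin{equation*}
h_k(\Delta\cup\Delta')=h_k(\Delta)+\sum_{i=0}^{k}(-1)^{k-i}\binom{d+1-i}{k-i}f_{i-1}(\Delta')-(-1)^k\binom{d+1}{k},
\end{equation*}
where the first term is literally $h_k(\Delta)$ and the correction $-(-1)^k\binom{d+1}{k}$ accounts for $f_{-1}$ being counted once rather than twice. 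So the only task is to rewrite the middle sum.

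The key step is to show that this middle sum equals $\sum_{p=0}^{n}(-1)^p\binom{n}{p}h_{k-p}(\Delta')$. For this I would apply Vandermonde's identity in the form
\begin{equation*}
\binom{d+1-i}{k-i}=\binom{(d-n+1-i)+n}{k-i}=\sum_{p=0}^{k-i}\binom{n}{p}\binom{d-n+1-i}{k-i-p},
\end{equation*}
substitute this into the sum, swap the order of summation so that $p$ sits outside and $i$ ranges from $0$ to $k-p$, and split $(-1)^{k-i}=(-1)^p\cdot(-1)^{(k-p)-i}$. After factoring $(-1)^p\binom{n}{p}$ out, the inner sum becomes
\begin{equation*}
\sum_{i=0}^{k-p}(-1)^{(k-p)-i}\binom{(d-n)+1-i}{(k-p)-i}f_{i-1}(\Delta'),
\end{equation*}
which is precisely the defining expression of $h_{k-p}(\Delta')$ with respect to $\tn{dim}(\Delta')=d-n$. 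Since $\binom{n}{p}=0$ for $p>n$, the outer sum in fact truncates at $p=n$, yielding the claimed formula. As a sanity check, setting $n=0$ collapses the plan back onto Lemma \ref{eqdim}.

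The only real obstacle is the bookkeeping. One has to spot that the gap $n=\tn{dim}(\Delta)-\tn{dim}(\Delta')$ is exactly what Vandermonde needs in order to turn $\binom{d+1-i}{k-i}$ into terms built from $\binom{(d-n)+1-i}{j}$, and then to verify carefully that the remaining inner sum matches the definition of $h_{k-p}(\Delta')$ verbatim. Once that identification is made, the swap of summations and re-indexing is mechanical, so no separate combinatorial lemma (in particular, Lemma \ref{sum} or the coning results) is actually required for this particular argument.
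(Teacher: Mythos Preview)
Your argument is correct and takes a genuinely different, more direct route than the paper. The paper introduces the auxiliary complex $\Delta''=\Delta'*[n]$, uses Corollary \ref{*[n]} to identify $h(\Delta'')$ with $(h(\Delta'),0,\ldots,0)$, applies Lemma \ref{eqdim} to $\Delta\cup\Delta''$, then recomputes $h_k(\Delta\cup\Delta'')$ via the $f$-vector of $\Delta'*[n]$ and compares the two expressions, finally invoking the combinatorial Lemma \ref{sum} to evaluate the coefficients $C_p$. You instead attack the middle sum $\sum_{i}(-1)^{k-i}\binom{d+1-i}{k-i}f_{i-1}(\Delta')$ head-on with Vandermonde to peel the excess $n$ off the upper index of the binomial, which collapses the inner sum straight into $h_{k-p}(\Delta')$. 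This bypasses the cone construction, Lemma \ref{cone}, Corollary \ref{*[n]}, and Lemma \ref{sum} entirely. The paper's detour has the merit of making the geometric idea (pad $\Delta'$ up to dimension $d$, then subtract the padding) visible and of yielding the coning results as byproducts of independent interest; your approach is shorter and purely combinatorial. One small point worth making explicit when you write it up: the terms with $i>d'+1$ contribute nothing because $f_{i-1}(\Delta')=0$ there, so Vandermonde is only being applied where $d'+1-i\geq 0$ and no negative-upper-index subtleties arise.
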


\begin{proof} Suppose $\Delta''=\Delta'*[n]$. It follows that $\tn{dim}(\Delta'')=d$, and hence by Lemma \ref{eqdim} and Corollary \ref{*[n]} one has
\begin{equation}\begin{split}
h_k(\Delta\cup\Delta'')&=h_k(\Delta)+h_k(\Delta'')-(-1)^k\binom{d+1}{k}\\
&=h_k(\Delta)+h_k(\Delta')-(-1)^k\binom{d+1}{k}.
\end{split}\end{equation}
Now assume $\tn{dim}(\Delta')=d'$ and $f(\Delta')=(g_0,g_1,\ldots,g_{d'})$. Applying induction on $n$ we get $f_i(\Delta'*[n])=\sum_{t=0}^n\binom{n}{t}g_{i-t}$ and hence $f_i(\Delta\cup\Delta'')=f_i(\Delta\cup\Delta')+\sum_{t=1}^n\binom{n}{t}g_{i-t}$. Therefore
\begin{equation}\begin{split}
h_k(\Delta\cup\Delta'')&=\sum_{i=0}^k (-1)^{k-i}\binom{d+1-i}{k-i}f_{i-1}(\Delta\cup\Delta'')\\
&=\sum_{i=0}^k (-1)^{k-i}\binom{d+1-i}{k-i}f_{i-1}(\Delta\cup\Delta')\\
&\qquad +\sum_{i=0}^k (-1)^{k-i}\binom{d+1-i}{k-i}\sum_{t=1}^n\binom{n}{t}g_{i-t-1}\\
&=h_k(\Delta\cup\Delta')+\sum_{t=1}^n\binom{n}{t}\sum_{i=0}^k (-1)^{k-i}\binom{d+1-i}{k-i}g_{i-t-1}.
\end{split}\end{equation}
Suppose $1\leq t \leq n$ is fixed. Since  $g_{i-t-1}=0$ for all $i<t$, one has
\begin{equation*}\begin{split}
\sum_{i=0}^k (-1)^{k-i}\binom{d+1-i}{k-i}g_{i-t-1}&=\sum_{i=t}^k (-1)^{k-i}\binom{d+1-i}{k-i}g_{i-t-1}\\
&=\sum_{j=0}^{k-t} (-1)^{(k-t)-j}\binom{(d-t+1)-j}{(k-t)-j}g_{j-1}.
\end{split}\end{equation*}
Now we apply the identity $\binom{\ell}{m}=\sum_{u=0}^{s}\binom{s}{u}\binom{\ell-s}{m-u}$ which is valid for all $0\leq s\leq \ell$ to obtain
$$\binom{(d-t+1)-j}{(k-t)-j}=\sum_{u=0}^{n-t}\binom{n-t}{u}\binom{(d-n+1)-j}{(k-t-u)-j}.$$
(Note that $n-t\leq d-t+1-j$ if and only if $d'=d-n\geq j-1$ which is true because $j-1\leq k-t-1\leq d'$ if and only if $k-1\leq d'+t\leq d'+n=d$ which is obvious.)\\
It follows that
\begin{equation*}\begin{split}
\sum_{i=0}^k (-1)^{k-i}&\binom{d+1-i}{k-i}g_{i-t-1}\\
&=\sum_{j=0}^{k-t} (-1)^{(k-t)-j}\sum_{u=0}^{n-t}\binom{n-t}{u}\binom{(d-n+1)-j}{(k-t-u)-j}g_{j-1}\\
&=\sum_{u=0}^{n-t}\binom{n-t}{u}\sum_{j=0}^{k-t} (-1)^{(k-t)-j}\binom{(d-n+1)-j}{(k-t-u)-j}g_{j-1}\\
&=\sum_{u=0}^{n-t}(-1)^u\binom{n-t}{u}\sum_{j=0}^{k-t-u} (-1)^{(k-t-u)-j}\binom{(d'+1)-j}{(k-t-u)-j}g_{j-1}\\
&=\sum_{u=0}^{n-t}(-1)^u\binom{n-t}{u}h_{k-t-u}(\Delta').
\end{split}\end{equation*}
Replacing the above relation in Equation 4.2 we get that
\begin{equation*}\begin{split}
h_k(\Delta\cup\Delta'')&=h_k(\Delta\cup\Delta')+\sum_{t=1}^n\sum_{u=0}^{n-t}(-1)^u\binom{n}{t}\binom{n-t}{u}h_{k-t-u}(\Delta')\\
&=h_k(\Delta\cup\Delta')+\sum_{p=1}^nC_ph_{k-p}(\Delta'),
\end{split}\end{equation*}
where $C_p$ is the coefficient of $h_{k-p}(\Delta')$ in the above. Indeed, since $0\leq t \leq n$ and $0\leq u\leq n-t$, we have $0\leq p=t+u\leq n$. Henceforth a direct computation shows that $C_p=\sum_{t=1}^p(-1)^{p-t}\binom{n}{t}\binom{n-t}{p-t}$ which together with Lemma \ref{sum} implies that $C_p=(-1)^{p+1}\binom{n}{p}$. Hence
$$h_k(\Delta\cup\Delta'')=h_k(\Delta\cup\Delta')+\sum_{p=1}^n(-1)^{p+1}\binom{n}{p}h_{k-p}(\Delta').$$
Comparing this with Equation 4.1 shows that
\begin{equation*}\begin{split}
h_k(\Delta\cup\Delta')&=h_k(\Delta)+h_k(\Delta')+\sum_{p=1}^n(-1)^{p}\binom{n}{p}h_{k-p}(\Delta')-(-1)^k\binom{d+1}{k}\\
&=h_k(\Delta)+\sum_{p=0}^n(-1)^{p}\binom{n}{p}h_{k-p}(\Delta')-(-1)^k\binom{d+1}{k}.
\end{split}\end{equation*}
\end{proof}

\begin{rem} It is easy to see that in the case where $k<p<k-d'-1$ one has $h_{k-p}(\Delta')=0$. Therefore the above formula can be written as
$$h_k(\Delta\cup\Delta')=h_k(\Delta)+\sum_{p=\tn{max}\{0,k-d'-1\}}^{\tn{min}\{n,k\}}(-1)^{p}
\binom{n}{p}h_{k-p}(\Delta')-(-1)^k\binom{d+1}{k}.$$
\end{rem}

\begin{exam} Let $h(\Delta)=(1,2,-4,1,2,-1)$ and $h(\Delta')=(1,2,0)$. Then $\tn{dim}(\Delta)=4$, $\tn{dim}(\Delta')=1$, and $n=\tn{dim}(\Delta)-\tn{dim}(\Delta')=3$. Using Proposition \ref{neqdim} we obtain
\begin{equation*}\begin{split}
(1)\quad &h_1(\Delta\cup\Delta')=h_1(\Delta)+h_1(\Delta')-3h_0(\Delta')+\binom{5}{1}=6,\\
(2)\quad &h_2(\Delta\cup\Delta')=h_2(\Delta)+h_2(\Delta')-3h_1(\Delta')+3h_0(\Delta')-\binom{5}{2}=-17,\\
(3)\quad &h_3(\Delta\cup\Delta')=h_3(\Delta)+h_3(\Delta')-3h_2(\Delta')+3h_1(\Delta')-h_0(\Delta')+\binom{5}{3}=16,\\
(4)\quad &h_4(\Delta\cup\Delta')=h_4(\Delta)+h_4(\Delta')-3h_3(\Delta')+3h_2(\Delta')-h_1(\Delta')-\binom{5}{4}=-5,\\
(5)\quad &h_5(\Delta\cup\Delta')=h_5(\Delta)+h_5(\Delta')-3h_4(\Delta')+3h_3(\Delta')-h_2(\Delta')+\binom{5}{5}=0.\\
\end{split}\end{equation*}
Therefore $h(\Delta\cup\Delta')=(1,6,-17,16,-5,0)$. Note that $\Delta=\langle\{x,y,z,t,u\},\{t,u,v\},\\\{v,w\}\rangle$ and $\Delta'=\langle\{a,b\},\{b,c\},\{c,d\}\rangle$, so that  a direct computation shows that $h(\Delta\cup\Delta')=(1,6,-17,16,-5,0)$.
\end{exam}

The next proposition provides Hilbert series of disjoint union of two complexes in terms of Hilbert series of the original complexes.

\begin{prop} \label{Hilbert} Let $\Delta$ and $\Delta'$ be two simplicial complexes with disjoint vertex sets. Then
$$H_{\mathbb{K}[\Delta\cup\Delta']}(t)=H_{\mathbb{K}[\Delta]}(t)+H_{\mathbb{K}[\Delta']}(t)-1.$$
\end{prop}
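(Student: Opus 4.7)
The plan is to expand both sides using the $f$-vector form of the Hilbert series, namely the identity
\begin{equation*}
H_{\mathbb{K}[\Delta]}(t) \;=\; \sum_{F \in \Delta}\left(\frac{t}{1-t}\right)^{|F|},
\end{equation*}
which holds for every simplicial complex (with the empty face contributing $1$). This is a direct consequence of the standard monomial $\mathbb{K}$-basis of $\mathbb{K}[\Delta]$, and is equivalent to the $h$-vector form recorded in Section 2.

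Because $V(\Delta)\cap V(\Delta')=\emptyset$, the faces of $\Delta\cup\Delta'$ are exactly the union of the faces of $\Delta$ and the faces of $\Delta'$, and the only face lying in both is $\emptyset$. Applying this observation to the formula above gives
\begin{equation*}
H_{\mathbb{K}[\Delta\cup\Delta']}(t) \;=\; \sum_{F\in\Delta}\left(\frac{t}{1-t}\right)^{|F|} + \sum_{F\in\Delta'}\left(\frac{t}{1-t}\right)^{|F|} - 1 \;=\; H_{\mathbb{K}[\Delta]}(t)+H_{\mathbb{K}[\Delta']}(t)-1,
\end{equation*}
where the subtracted $1$ accounts for the double-count of the empty face.

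There is essentially no obstacle once the face-sum expansion is in hand; the whole argument is a one-step inclusion--exclusion. As an alternative route that stays inside the language of the $h$-vectors just computed, one may substitute the formula of Proposition \ref{neqdim} into the numerator of $H_{\mathbb{K}[\Delta\cup\Delta']}(t)=\sum_{k=0}^{d+1} h_k(\Delta\cup\Delta')t^k/(1-t)^{d+1}$. The three summands there produce, respectively, $H_{\mathbb{K}[\Delta]}(t)$; a double sum that factors after re-indexing $j=k-p$ as $(1-t)^n \sum_j h_j(\Delta')t^j$ and hence yields $H_{\mathbb{K}[\Delta']}(t)$ upon dividing by $(1-t)^{d+1}$ (using $n+d'+1=d+1$); and the correction $-\sum_k(-1)^k\binom{d+1}{k}t^k=-(1-t)^{d+1}$, which contributes $-1$ after division. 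Either route disposes of the claim with a short calculation.
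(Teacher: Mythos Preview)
Your proof is correct. Your primary argument---the face-sum expansion $H_{\mathbb{K}[\Delta]}(t)=\sum_{F\in\Delta}(t/(1-t))^{|F|}$ followed by a one-line inclusion--exclusion on the empty face---is genuinely different from, and considerably shorter than, the paper's proof. The paper instead takes exactly the route you sketch as your ``alternative'': it substitutes the $h$-vector formula of Proposition~\ref{neqdim} into the numerator of the Hilbert series, identifies the first summand as $H_{\mathbb{K}[\Delta]}(t)$, re-indexes the double sum with $j=i-p$ to factor out $(1-t)^n$ and recover $H_{\mathbb{K}[\Delta']}(t)$, and uses the binomial theorem on the correction term to obtain the $-1$. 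What the paper's approach buys is that it demonstrates the $h$-vector proposition in action and keeps the whole section in the $h$-vector language; what your approach buys is that it makes the statement an immediate consequence of the standard $f$-vector form of the Hilbert series, with no need for Proposition~\ref{neqdim} at all.
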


\begin{proof} Assume $\tn{dim}(\Delta')\leq \tn{dim}(\Delta)$, and set $n=\tn{dim}(\Delta)-\tn{dim}(\Delta')$. It follows from Proposition \ref{neqdim} that
\begin{equation*}\begin{split}
H_{\mathbb{K}[\Delta\cup\Delta']}(t)&=\frac{\sum_{i=0}^{d+1}h_i(\Delta)t^i+\sum_{i=0}^{d+1}(\sum_{p=0}^n(-1)^{p}\binom{n}{p}h_{i-p}(\Delta'))t^i
-\sum_{i=0}^{d+1}(-1)^i\binom{d+1}{i}t^i}{(1-t)^{d+1}}\\
&=H_{\mathbb{K}[\Delta]}(t)+\frac{\sum_{i=0}^{d+1}(\sum_{p=0}^n(-1)^{p}\binom{n}{p}h_{i-p}(\Delta'))t^i}{(1-t)^{d+1}}-1.\\
\end{split}\end{equation*}
Since $h_{i-p}(\Delta')=0$ for $i<p$ and for $i>d'+p+1$, one has
\begin{equation*}\begin{split}
\sum_{i=0}^{d+1}(\sum_{p=0}^n(-1)^{p}\binom{n}{p}h_{i-p}(\Delta'))t^i
&=\sum_{p=0}^n(-1)^{p}\binom{n}{p}\sum_{i=0}^{d+1}h_{i-p}(\Delta')t^i\\
&=\sum_{p=0}^n(-1)^{p}\binom{n}{p}\sum_{i=p}^{d'+p+1}h_{i-p}(\Delta')t^i\\
&=\sum_{p=0}^n(-1)^{p}\binom{n}{p}\sum_{j=0}^{d'+1}h_{j}(\Delta')t^{p+j}\\
&=\sum_{p=0}^n(-1)^{p}\binom{n}{p}t^p\sum_{j=0}^{d'+1}h_{j}(\Delta')t^j\\
&=(1-t)^n\sum_{j=0}^{d'+1}h_{j}(\Delta')t^j.
\end{split}\end{equation*}
Therefore we get that
\begin{equation*}\begin{split}
\frac{\sum_{i=0}^{d+1}(\sum_{p=0}^n(-1)^{p}\binom{n}{p}h_{i-p}(\Delta'))t^i}{(1-t)^{d+1}}
&=\frac{(1-t)^n\sum_{j=0}^{d'+1}h_{j}(\Delta')t^j}{(1-t)^{d+1}}\\
&=\frac{\sum_{j=0}^{d'+1}h_{j}(\Delta')t^j}{(1-t)^{d'+1}}=H_{\mathbb{K}[\Delta']}(t),
\end{split}\end{equation*}
which completes the proof.
\end{proof}

Using induction on the number of connected components of a simplicial complex we have the following result.

\begin{cor} \label{Gen:Hilbert} Let $\Delta$ be a simplicial complex and let $\Delta_1,\ldots,\Delta_r$ be connected components of $\Delta$. Then
$$H_{\mathbb{K}[\Delta]}(t)=\sum_{j=1}^rH_{\mathbb{K}[\Delta_j]}(t)-(r-1).$$
\end{cor}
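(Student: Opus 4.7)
The plan is to proceed by induction on $r$, the number of connected components of $\Delta$. The essential tool is Proposition \ref{Hilbert}, which already handles the two-component case; the corollary just amortizes the $-1$ term across a telescoping induction. The only conceptual point to address first is that distinct connected components of a simplicial complex automatically sit on disjoint vertex sets and share no faces, so if $\Delta$ has connected components $\Delta_1,\ldots,\Delta_r$ then in fact $\Delta=\Delta_1\cup\cdots\cup\Delta_r$ as a disjoint union of simplicial complexes in the sense used throughout Section 4. This legitimizes applying Proposition \ref{Hilbert} at each step.

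For the base case $r=1$, both sides equal $H_{\mathbb{K}[\Delta_1]}(t)$, and the $-(r-1)=0$ term vanishes. For the inductive step, assume the formula has been established for any simplicial complex with $r-1$ connected components, and let $\Delta$ have $r$ connected components $\Delta_1,\ldots,\Delta_r$. Set $\Delta'=\Delta_1\cup\cdots\cup\Delta_{r-1}$. Then $\Delta'$ is a simplicial complex with exactly $r-1$ connected components, and $\Delta=\Delta'\cup\Delta_r$ with $V(\Delta')\cap V(\Delta_r)=\emptyset$. Proposition \ref{Hilbert} then yields
$$H_{\mathbb{K}[\Delta]}(t)=H_{\mathbb{K}[\Delta']}(t)+H_{\mathbb{K}[\Delta_r]}(t)-1,$$
and the inductive hypothesis applied to $\Delta'$ gives
$$H_{\mathbb{K}[\Delta']}(t)=\sum_{j=1}^{r-1}H_{\mathbb{K}[\Delta_j]}(t)-(r-2).$$
Substituting and simplifying produces exactly $\sum_{j=1}^{r}H_{\mathbb{K}[\Delta_j]}(t)-(r-1)$, completing the induction.

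There is essentially no obstacle in this argument, since all the combinatorial work is absorbed into Proposition \ref{Hilbert}. The only thing worth being careful about is the book-keeping of the constant: each application of Proposition \ref{Hilbert} contributes a single $-1$, and $r-1$ applications are needed to peel off all $r$ components, which matches the claimed constant $-(r-1)$.
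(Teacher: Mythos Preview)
Your proof is correct and follows exactly the approach the paper indicates: the paper states the corollary immediately after Proposition \ref{Hilbert} with only the remark ``using induction on the number of connected components,'' and your argument is precisely that induction spelled out in detail.
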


We now bring some results on $h$-vector and Hilbert series of product of graphs. Recall that for any two graphs $G$ and $F$ with disjoint vertex sets one has $\Delta_{G*F}=\Delta_G\cup \Delta_F$ (as a disjoint union). Let $h(G)$ denotes the $h$-vector of the independence complex $\Delta_G$. By specializing Proposition \ref{neqdim} to the independence complex we have the following.

\begin{cor} \label{graph} Let $G$ and $F$ be two simple graphs with disjoint vertex sets. Also let $\tn{dim}(\Delta_F)\leq \tn{dim}(\Delta_G)$ and assume $n=\tn{dim}(\Delta_G)-\tn{dim}(\Delta_F)$. Then
$$h_k(G*F)=h_k(G)+\sum_{p=0}^{n}(-1)^p\binom{n}{p}h_{k-p}(F)-(-1)^k\binom{d+1}{k},$$
for all $0\leq k \leq d+1$, where $d=\tn{dim}(\Delta_G)$.
\end{cor}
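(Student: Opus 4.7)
The plan is to observe that this corollary is a direct specialization of Proposition \ref{neqdim} to the independence complex, using the structural fact about $\Delta_{G*F}$ already established in Lemma \ref{lem:disj}. So there is essentially no new content to prove; the work has been done.

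First, I would recall Lemma \ref{lem:disj}, which says that $\Delta_{G*F}=\Delta_G\cup\Delta_F$ as a disjoint union of simplicial complexes (their vertex sets $V(G)$ and $V(F)$ are disjoint by hypothesis, and any face containing vertices from both would contain an edge of $G*F$). This is the key bridge between the graph-theoretic statement and the simplicial statement.

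Next, I would invoke Proposition \ref{neqdim} with $\Delta:=\Delta_G$ and $\Delta':=\Delta_F$. The hypothesis $\tn{dim}(\Delta_F)\leq\tn{dim}(\Delta_G)$ and the definition $n=\tn{dim}(\Delta_G)-\tn{dim}(\Delta_F)$ match exactly the hypotheses of that proposition. Applying it yields
\[
h_k(\Delta_G\cup\Delta_F)=h_k(\Delta_G)+\sum_{p=0}^{n}(-1)^p\binom{n}{p}h_{k-p}(\Delta_F)-(-1)^k\binom{d+1}{k}
\]
for $0\leq k\leq d+1$, where $d=\tn{dim}(\Delta_G)$.

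Finally, by the notational convention introduced just before the statement (namely $h(G):=h(\Delta_G)$ and similarly for $F$), and using $h_k(G*F)=h_k(\Delta_{G*F})=h_k(\Delta_G\cup\Delta_F)$ from the first step, the displayed formula rewrites as the claimed identity. There is no genuine obstacle in this argument; the only thing to be careful about is making sure the dimension hypothesis in Proposition \ref{neqdim} is correctly transported (i.e.\ $d=\tn{dim}(\Delta_G)$, not $\tn{dim}(G)$ in any graph-theoretic sense), and that the range $0\leq k\leq d+1$ is preserved. Thus the proof can be written in two lines: ``By Lemma \ref{lem:disj}, $\Delta_{G*F}=\Delta_G\cup\Delta_F$; now apply Proposition \ref{neqdim}.''
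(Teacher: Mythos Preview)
Your proposal is correct and matches the paper's approach exactly: the paper simply notes that $\Delta_{G*F}=\Delta_G\cup\Delta_F$ (Lemma \ref{lem:disj}) and states that the corollary is obtained ``by specializing Proposition \ref{neqdim} to the independence complex,'' without giving any further argument. Your two-line summary is precisely what the paper does.
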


We have also the following translation of Corollary \ref{Gen:Hilbert} to Hilbert series of product of graphs.

\begin{cor} \label{Hilbert:graph} Let $G_1,\ldots,G_r$ be simple graphs with disjoint vertex sets. Then
$$H_{\mathbb{K}[G_1*\cdots *G_r]}(t)=\sum_{j=1}^rH_{\mathbb{K}[G_j]}(t)-(r-1).$$
\end{cor}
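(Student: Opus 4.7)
The plan is a straightforward induction on $r$, using Proposition \ref{Hilbert} as the engine and Lemma \ref{lem:disj} to identify the relevant simplicial-complex structure at each step. The base case $r=1$ is vacuous, and the case $r=2$ is exactly Proposition \ref{Hilbert} applied to $\Delta_{G_1*G_2}=\Delta_{G_1}\cup\Delta_{G_2}$, which is a disjoint union by Lemma \ref{lem:disj}.

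For the inductive step, I would first observe that the graph-theoretic product $*$ is associative on graphs with pairwise disjoint vertex sets, so we may write $G_1*\cdots*G_r=(G_1*\cdots*G_{r-1})*G_r$. These two factors have disjoint vertex sets, and Lemma \ref{lem:disj} gives
$$\Delta_{G_1*\cdots*G_r}=\Delta_{G_1*\cdots*G_{r-1}}\cup\Delta_{G_r}$$
as a disjoint union of complexes over disjoint vertex sets. Applying Proposition \ref{Hilbert} to this decomposition yields
$$H_{\mathbb{K}[G_1*\cdots*G_r]}(t)=H_{\mathbb{K}[G_1*\cdots*G_{r-1}]}(t)+H_{\mathbb{K}[G_r]}(t)-1,$$
and plugging the inductive hypothesis into the first term on the right collapses the expression to $\sum_{j=1}^{r}H_{\mathbb{K}[G_j]}(t)-(r-1)$, closing the induction.

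There is no real obstacle here: the entire argument is carried by Proposition \ref{Hilbert} and Lemma \ref{lem:disj}, and the constants bookkeep themselves since $(r-2)+1=r-1$. The only minor point to check is the associativity of $*$ on graphs with pairwise disjoint vertex sets, which is immediate from the definition because the full set of cross edges between any two factors appears in both $(G_1*G_2)*G_3$ and $G_1*(G_2*G_3)$. As an alternative presentation, one could simply invoke Corollary \ref{Gen:Hilbert}, noting that its proof actually applies to any decomposition of a complex as a disjoint union over disjoint vertex sets, not only the decomposition into connected components; but the inductive route via Proposition \ref{Hilbert} keeps the argument self-contained and requires no re-examination of the earlier proof.
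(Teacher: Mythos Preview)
Your proof is correct and is essentially the same as the paper's. The paper simply notes that $\Delta_{G_1*\cdots*G_r}=\bigcup_{j=1}^r\Delta_{G_j}$ and invokes Corollary~\ref{Gen:Hilbert}; since that corollary is itself proved by induction using Proposition~\ref{Hilbert}, your inductive argument just unwinds the same reasoning at the level of graphs rather than complexes. Your remark that Corollary~\ref{Gen:Hilbert} is stated for connected components while the $\Delta_{G_j}$ need not be connected is a fair observation, and your direct route via Proposition~\ref{Hilbert} sidesteps that nicety cleanly.
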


\begin{proof} It suffices to notice that
$$\Delta_{G_1*\cdots *G_r}=\bigcup_{j=1}^r\Delta_{G_j},$$
and apply Corollary \ref{Gen:Hilbert}.
\end{proof}

We now bring some results on $h$-vector and Hilbert series of product of special graphs. The first one is about product of a graph by a complete one.

\begin{cor} \label{comp} Let $G$ be a simple graph with $\tn{dim}(\Delta_G)=d$ and let $\KK_m$ denotes the complete graph with $m$ vertices. Then
$$h_k(G*\KK_m)=h_k(G)+(-1)^{k-1}\binom{d}{k-1}m,$$
and
$$H_{\mathbb{K}[G*\KK_m]}(t)=H_{\mathbb{K}[G]}(t)+m\frac{t}{1-t}.$$
\end{cor}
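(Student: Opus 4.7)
My plan is to deduce both formulas from the results just established (Corollary \ref{graph} and Corollary \ref{Hilbert:graph}) applied to the pair $(G,\KK_m)$, with $\KK_m$ playing the role of the lower-dimensional factor. The key observation is that the independence complex $\Delta_{\KK_m}$ is just $m$ isolated vertices, so $\tn{dim}(\Delta_{\KK_m})=0$, $f_{-1}=1$, $f_0=m$, and a direct calculation from the definition of the $h$-vector gives $h(\KK_m)=(1,m-1)$. Equivalently, $H_{\mathbb{K}[\KK_m]}(t)=(1+(m-1)t)/(1-t)$.

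For the Hilbert series, I would simply invoke Corollary \ref{Hilbert:graph} with $r=2$, giving
\[
H_{\mathbb{K}[G*\KK_m]}(t)=H_{\mathbb{K}[G]}(t)+H_{\mathbb{K}[\KK_m]}(t)-1=H_{\mathbb{K}[G]}(t)+\frac{1+(m-1)t}{1-t}-1,
\]
and the last two terms combine to $mt/(1-t)$, which is the desired expression.

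For the $h$-vector, I would apply Corollary \ref{graph} with $F=\KK_m$, so that $n=\tn{dim}(\Delta_G)-0=d$. Since $h_j(\KK_m)=0$ for $j\notin\{0,1\}$, only the terms $p=k$ and $p=k-1$ survive in the sum, producing
\[
h_k(G*\KK_m)=h_k(G)+(-1)^{k}\binom{d}{k}+(-1)^{k-1}\binom{d}{k-1}(m-1)-(-1)^{k}\binom{d+1}{k}.
\]
The cleanup step is Pascal's identity $\binom{d+1}{k}=\binom{d}{k}+\binom{d}{k-1}$, which exactly cancels the $(-1)^{k}\binom{d}{k}$ term and converts the coefficient of $\binom{d}{k-1}$ into $(-1)^{k-1}m$, giving the stated formula.

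I do not expect a serious obstacle here: the only thing to be careful about is the boundary behavior when $k=0$ (where $\binom{d}{k-1}=0$, consistent with $h_0(G*\KK_m)=h_0(G)=1$) and when $k=d+1$ (where $\binom{d}{k}=0$ but $\binom{d}{k-1}$ can still contribute, as it should since $\Delta_{G*\KK_m}$ has the same dimension $d$ as $\Delta_G$). These edge cases are handled automatically by the binomial coefficient conventions in Corollary \ref{graph}, so the derivation is essentially a direct substitution followed by a single application of Pascal's identity.
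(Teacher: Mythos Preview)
Your proposal is correct and follows essentially the same route as the paper: both arguments observe that $h(\KK_m)=(1,m-1)$, apply Corollary~\ref{graph} so that only the $p=k-1$ and $p=k$ terms survive, and then use Pascal's identity $\binom{d+1}{k}=\binom{d}{k}+\binom{d}{k-1}$ for the cleanup; the Hilbert series statement is likewise obtained in both by plugging $H_{\mathbb{K}[\KK_m]}(t)=(1+(m-1)t)/(1-t)$ into Corollary~\ref{Hilbert:graph}. Your added remarks on the boundary cases $k=0$ and $k=d+1$ are a nice bit of extra care but do not change the argument.
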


\begin{proof} First note that $\tn{dim}(\Delta_{\KK_m})=0$ and $h(\KK_m)=(1,m-1)$. Now Corollary \ref{graph} yields
\begin{equation*}\begin{split}
h_k(G*\KK_m)&=h_k(G)+\sum_{p=0}^{d}(-1)^p\binom{d}{p}h_{k-p}(\KK_m)-(-1)^k\binom{d+1}{k}\\
&=h_k(G)+\sum_{p=k-1}^{k}(-1)^p\binom{d}{p}h_{k-p}(\KK_m)-(-1)^k\binom{d+1}{k}\\
&=h_k(G)+(-1)^{k-1}\binom{d}{k-1}(m-1)+(-1)^k\binom{d}{k}-(-1)^k\binom{d+1}{k}\\
&=h_k(G)+(-1)^{k-1}\binom{d}{k-1}m.
\end{split}\end{equation*}
For the second assertion on Hilbert series, using Corollary \ref{Hilbert:graph} we get
$$H_{\mathbb{K}[G*\KK_m]}(t)=H_{\mathbb{K}[G]}(t)+\frac{1+(m-1)t}{1-t}-1=H_{\mathbb{K}[G]}(t)+m\frac{t}{1-t}.$$
\end{proof}

As an immediate consequence we have the next corollary.

\begin{cor} \label{HK1} Let $G$ be a simple graph and let $x\in V(G)$ be adjacent to all other vertices of $G$. Then
$$H_{\mathbb{K}[G]}(t)=H_{\mathbb{K}[G\setminus\{x\}]}(t)+\frac{t}{1-t}.$$
In particular, if $\WW_n$ and $\mathcal{S}_n$ be the wheel graph and star graph on $n+1$ vertices, respectively, then
$$H_{\mathbb{K}[\WW_n]}(t)=H_{\mathbb{K}[\CC_n]}(t)+\frac{t}{1-t},$$
and
$$H_{\mathbb{K}[\mathcal{S}_n]}(t)=\frac{1+t(1-t)^{n-1}}{(1-t)^{n}}.$$
\end{cor}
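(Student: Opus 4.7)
The plan is to observe that the hypothesis on $x$ yields the product decomposition $G \simeq (G \setminus \{x\}) * \KK_1$. Indeed, since $x$ is adjacent to every other vertex, the edges of $G$ are exactly the edges of $G\setminus\{x\}$ together with all edges joining $x$ to $V(G)\setminus\{x\}$, which is precisely the edge set of the product $(G\setminus\{x\}) * \KK_1$.

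With this in hand, the first identity is an immediate instance of Corollary \ref{comp} applied with $m=1$, since $m\frac{t}{1-t} = \frac{t}{1-t}$. No further computation is needed.

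For the wheel case, I would simply note that $\WW_n = \CC_n * \KK_1$ (the apex of the wheel is adjacent to every vertex of the cycle), so the first identity gives the claimed formula directly.

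For the star graph, I would write $\mathcal{S}_n = \overline{\KK}_n * \KK_1$, which is just the statement that $\mathcal{S}_n$ consists of one central vertex joined to $n$ independent leaves. Applying the first identity yields
\[
H_{\mathbb{K}[\mathcal{S}_n]}(t) = H_{\mathbb{K}[\overline{\KK}_n]}(t) + \frac{t}{1-t}.
\]
Since $\overline{\KK}_n$ has no edges, $I(\overline{\KK}_n) = 0$ and $\mathbb{K}[\overline{\KK}_n]$ is the polynomial ring in $n$ variables, whose Hilbert series is $\frac{1}{(1-t)^n}$. Combining and putting over the common denominator $(1-t)^n$ gives $\frac{1 + t(1-t)^{n-1}}{(1-t)^n}$, as claimed.

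There is no real obstacle here; the entire corollary is a straightforward bookkeeping exercise once one recognizes the three product decompositions $G \simeq (G\setminus\{x\})*\KK_1$, $\WW_n = \CC_n * \KK_1$, and $\mathcal{S}_n = \overline{\KK}_n * \KK_1$, and applies Corollary \ref{comp} together with the trivial computation of $H_{\mathbb{K}[\overline{\KK}_n]}(t)$.
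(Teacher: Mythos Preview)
Your proof is correct and follows essentially the same route as the paper: both recognize $G\simeq (G\setminus\{x\})*\KK_1$, invoke Corollary~\ref{comp} with $m=1$, and specialize to $\WW_n=\CC_n*\KK_1$ and $\mathcal{S}_n=\overline{\KK}_n*\KK_1$. The only minor difference is that you obtain $H_{\mathbb{K}[\overline{\KK}_n]}(t)=1/(1-t)^n$ directly from $I(\overline{\KK}_n)=0$, whereas the paper computes the $h$-vector of $\overline{\KK}_n$ via its $f$-vector and Lemma~\ref{sum}; your shortcut is cleaner but the argument is otherwise identical.
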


\begin{proof} We just need to prove the last assertion. Note that $\Delta_{\overline{\KK}_m}=[m]$ is the full simplex over $m$ vertices and hence the $f$-vector of $\overline{\KK}_m$ is $f(\overline{\KK}_m)=(\binom{m}{1},\binom{m}{2},\ldots,\binom{m}{m})$. Therefore Lemma \ref{sum} yields that the $h$-vector of $\overline{\KK}_m$ is $h(\overline{\KK}_m)=(1,\underbrace{0,0,\ldots,0}_{m-\tn{times}})$. Now we have
$$H_{\mathbb{K}[\mathcal{S}_n]}(t)=H_{\mathbb{K}[\overline{\KK}_n]}(t)+\frac{t}{1-t}=
\frac{1}{(1-t)^n}+\frac{t}{1-t}=\frac{1+t(1-t)^{n-1}}{(1-t)^{n}}.$$
\end{proof}

We now consider product of a graph by the complement of complete graphs.

\begin{cor} \label{ccom} Let $G$ be a simple graph with $\tn{dim}(\Delta_G)=d$ and let $\overline{\KK}_m$ denotes the complement of $\KK_m$. Then
\begin{gather*}
h_k(G*\overline{\KK}_m)
\begin{cases}
h_k(G)-(-1)^k\{\binom{d+1}{k}-\binom{d-m+1}{k}\}&{~\tn{ if }~ m\leq d+1}
\\
\sum_{p=0}^{m-d-1}(-1)^p\binom{m-d-1}{p}h_{k-p}(G)-(-1)^k\binom{m}{k}&{~\tn{ if }~ m> d+1},
\end{cases}
\end{gather*}
for all $k>0$.
In particular,
$$h_k(\KK_{m,n})=(-1)^{k+1}\biggl\{\binom{n}{k}-\binom{n-m}{k}\biggr\},$$
for all $k>0.$
\end{cor}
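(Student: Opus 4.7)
The plan is to apply Corollary \ref{graph} directly, exploiting the fact that $\overline{\KK}_m$ has the simplest possible independence complex. Specifically, $\Delta_{\overline{\KK}_m}=[m]$ is the full simplex on $m$ vertices, so $\tn{dim}(\Delta_{\overline{\KK}_m})=m-1$ and (as already computed in the proof of Corollary \ref{HK1}) its $h$-vector is $(1,0,0,\ldots,0)$. Because of this, only the term with $k-p=0$ in the sum $\sum_p (-1)^p\binom{\cdot}{p}h_{k-p}(\overline{\KK}_m)$ will survive, which is what makes the formulas collapse so cleanly.

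I would split along the hypothesis of Corollary \ref{graph}, namely according to which of the two independence complexes has the larger dimension. If $m\leq d+1$, then $\tn{dim}(\Delta_{\overline{\KK}_m})=m-1\leq d=\tn{dim}(\Delta_G)$, so I apply Corollary \ref{graph} with $F=\overline{\KK}_m$ and $n=d-(m-1)=d-m+1$. The sum $\sum_{p=0}^{d-m+1}(-1)^p\binom{d-m+1}{p}h_{k-p}(\overline{\KK}_m)$ reduces to the single term at $p=k$, giving $(-1)^k\binom{d-m+1}{k}$, and we immediately obtain
\[
h_k(G*\overline{\KK}_m)=h_k(G)+(-1)^k\binom{d-m+1}{k}-(-1)^k\binom{d+1}{k},
\]
which rearranges to the first case of the formula.

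If on the other hand $m>d+1$, then $\tn{dim}(\Delta_{\overline{\KK}_m})>\tn{dim}(\Delta_G)$, so I apply Corollary \ref{graph} with the roles swapped: $G$ plays the part of ``$F$'' and $\overline{\KK}_m$ plays the part of ``$G$''. The relevant parameters become $d'=m-1$ and $n=(m-1)-d=m-d-1$. Since $k>0$ and the only nonzero component of $h(\overline{\KK}_m)$ is in degree $0$, the term $h_k(\overline{\KK}_m)$ disappears, leaving
\[
h_k(G*\overline{\KK}_m)=\sum_{p=0}^{m-d-1}(-1)^p\binom{m-d-1}{p}h_{k-p}(G)-(-1)^k\binom{m}{k},
\]
which is precisely the second case.

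For the ``In particular'' statement I would write $\KK_{m,n}=\overline{\KK}_m*\overline{\KK}_n$ and (assuming without loss of generality that $m\leq n$) apply the first case with $G=\overline{\KK}_n$, so that $d=n-1$ and $d-m+1=n-m$. Since $h_k(\overline{\KK}_n)=0$ for $k>0$, the formula reduces at once to $(-1)^{k+1}\{\binom{n}{k}-\binom{n-m}{k}\}$. The main thing to be careful about is the bookkeeping for the case split and the index $k=p$ that picks out the only nonzero contribution; no serious obstacle is expected, since all the hard work has already been done in Proposition \ref{neqdim}.
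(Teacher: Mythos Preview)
Your proposal is correct and follows essentially the same approach as the paper: both argue by noting $h(\overline{\KK}_m)=(1,0,\ldots,0)$, splitting into the cases $m\le d+1$ and $m>d+1$, applying Corollary~\ref{graph} (with roles swapped in the second case, using $k>0$ to kill $h_k(\overline{\KK}_m)$), and deducing the $\KK_{m,n}$ formula from $\KK_{m,n}\simeq\overline{\KK}_m*\overline{\KK}_n$. Your explicit ``without loss of generality $m\le n$'' is a small clarification the paper leaves implicit, but otherwise the arguments are the same.
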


\begin{proof} First assume $m\leq d+1$. Since $h(\overline{\KK}_m)=(1,\underbrace{0,0,\ldots,0}_{m-\tn{times}})$, one has
\begin{equation*}\begin{split}
h_k(G*\overline{\KK}_m)&=h_k(G)+\sum_{p=0}^{d-m+1}(-1)^p\binom{d-m+1}{p}h_{k-p}(\overline{\KK}_m)-(-1)^k\binom{d+1}{k}\\
&=h_k(G)+(-1)^k\binom{d-m+1}{k}-(-1)^k\binom{d+1}{k}\\
&=h_k(G)-(-1)^k\biggl\{\binom{d+1}{k}-\binom{d-m+1}{k}\biggr\}.
\end{split}\end{equation*}
Now assume $m>d+1$ and $k>0$. We have
\begin{equation*}\begin{split}
h_k(\overline{\KK}_m*G)&=h_k(\overline{\KK}_m)+\sum_{p=0}^{m-d-1}(-1)^p\binom{m-d-1}{p}h_{k-p}(G)-(-1)^k\binom{m}{k}\\
&=\sum_{p=0}^{m-d-1}(-1)^p\binom{m-d-1}{p}h_{k-p}(G)-(-1)^k\binom{m}{k}.
\end{split}\end{equation*}
The last assertion follows from $\KK_{m,n}\simeq\overline{\KK}_n*\overline{\KK}_m$.
\end{proof}

\begin{cor} \label{Hccom} Let $G$ be a simple graph and let $\overline{\KK}_m$ denotes the complement of $\KK_m$. Then
$$H_{\mathbb{K}[G*\overline{\KK}_m]}(t)=H_{\mathbb{K}[G]}(t)+\frac{1}{(1-t)^m}-1.$$
In particular,
$$H_{\mathbb{K}[\KK_{m,n}]}(t)=\frac{1}{(1-t)^n}+\frac{1}{(1-t)^m}-1.$$
\end{cor}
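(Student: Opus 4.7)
The plan is to invoke Corollary \ref{Hilbert:graph} (the additivity-up-to-constant formula for Hilbert series under $*$-product) with $r=2$, $G_1=G$, and $G_2=\overline{\KK}_m$. This reduces the whole statement to identifying the Hilbert series of the discrete graph $\overline{\KK}_m$.

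First I would observe that since $\overline{\KK}_m$ has no edges, its edge ideal vanishes, so $\mathbb{K}[\overline{\KK}_m]=\mathbb{K}[x_1,\ldots,x_m]$ is a polynomial ring in $m$ variables, whose Hilbert series is the standard
$$H_{\mathbb{K}[\overline{\KK}_m]}(t)=\frac{1}{(1-t)^m}.$$
Equivalently, one may notice that $\Delta_{\overline{\KK}_m}=[m]$ is the full $(m-1)$-simplex, with $h$-vector $(1,0,\ldots,0)$ as already computed in the proof of Corollary \ref{HK1}, giving the same Hilbert series. Substituting into Corollary \ref{Hilbert:graph} immediately yields
$$H_{\mathbb{K}[G*\overline{\KK}_m]}(t)=H_{\mathbb{K}[G]}(t)+\frac{1}{(1-t)^m}-1,$$
which is the first claim. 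Note that no dimension hypothesis on $G$ is needed here: the additivity formula of Corollary \ref{Hilbert:graph} already bundles together the dimension bookkeeping that appears explicitly at the level of $h$-vectors in Corollary \ref{ccom}.

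For the particular case, I would use the standard identification $\KK_{m,n}\simeq \overline{\KK}_n*\overline{\KK}_m$ (the complete bipartite graph on partition $V\cup V'$ with $|V|=n$, $|V'|=m$ is exactly the product of two discrete graphs on these vertex sets). Applying the first part with $G=\overline{\KK}_n$ and using $H_{\mathbb{K}[\overline{\KK}_n]}(t)=1/(1-t)^n$ gives
$$H_{\mathbb{K}[\KK_{m,n}]}(t)=\frac{1}{(1-t)^n}+\frac{1}{(1-t)^m}-1,$$
as required.

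There is no genuine obstacle here: every ingredient is already in place, and the only non-trivial step is recognizing that the Stanley--Reisner ring of a discrete graph on $m$ vertices is literally the polynomial ring. The corollary is a direct specialization of Corollary \ref{Hilbert:graph}, and the bipartite formula is a one-line consequence of the decomposition $\KK_{m,n}=\overline{\KK}_m*\overline{\KK}_n$.
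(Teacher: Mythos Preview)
Your proposal is correct and matches the paper's intended argument: the corollary is stated without proof, but it is clearly meant to follow immediately from Corollary \ref{Hilbert:graph} together with the identification $H_{\mathbb{K}[\overline{\KK}_m]}(t)=1/(1-t)^m$ (already noted in the proof of Corollary \ref{HK1}), and the bipartite case from $\KK_{m,n}\simeq\overline{\KK}_n*\overline{\KK}_m$. There is nothing to add.
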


\begin{rem} Let $\KK_{n_1,\ldots,n_r}$ denotes the complete multipartite graph with vertex partition $\cup_{i=1}^rV_i$, where $|V_i|=n_i$ for all $1\leq i \leq r$. Then
$$H_{\mathbb{K}[\KK_{n_1,\ldots,n_r}]}(t)=\sum_{i=1}^r\frac{1}{(1-t)^{n_i}}-(r-1).$$
\end{rem}

As the final result of this section we will compute Hilbert series of $G*\mathcal{S}_m$. To do this we just need to notice that
$$G*\mathcal{S}_m\simeq G*\overline{\KK}_m*\KK_1,$$
and apply Corollary \ref{Hilbert:graph}, or Corollaries \ref{HK1} and \ref{Hccom}.

\begin{cor} Let $G$ be a simple graph and $\mathcal{S}_m$ be the star graph with $m+1$ vertices. Then
$$H_{\mathbb{K}[G*\mathcal{S}_m]}(t)=H_{\mathbb{K}[G]}(t)+\frac{1}{(1-t)^m}+\frac{1}{1-t}-2.$$
\end{cor}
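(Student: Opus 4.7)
The plan is to reduce the statement to an immediate application of the additivity formula for Hilbert series of products of graphs (Corollary \ref{Hilbert:graph}) by exhibiting the star $\mathcal{S}_m$ itself as a graph product. First I would verify the decomposition $G*\mathcal{S}_m \simeq G*\overline{\KK}_m*\KK_1$. The star $\mathcal{S}_m$ consists of one apex $x$ joined to $m$ pairwise non-adjacent leaves; by the definition of graph product this is precisely $\overline{\KK}_m * \KK_1$ (the leaves form $\overline{\KK}_m$, the apex is $\KK_1$, and the product supplies the edges from $x$ to every leaf). Since the product $*$ is associative on graphs with pairwise disjoint vertex sets, the claimed identification follows.

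Given this decomposition, I would apply Corollary \ref{Hilbert:graph} with $r=3$ and $G_1=G$, $G_2=\overline{\KK}_m$, $G_3=\KK_1$, which yields
$$H_{\mathbb{K}[G*\mathcal{S}_m]}(t) = H_{\mathbb{K}[G]}(t) + H_{\mathbb{K}[\overline{\KK}_m]}(t) + H_{\mathbb{K}[\KK_1]}(t) - 2.$$
It then remains to evaluate the two elementary Hilbert series. Since $\overline{\KK}_m$ is a discrete graph, $I(\overline{\KK}_m) = 0$ and $\mathbb{K}[\overline{\KK}_m]$ is a polynomial ring in $m$ variables, so $H_{\mathbb{K}[\overline{\KK}_m]}(t) = 1/(1-t)^m$; similarly $\mathbb{K}[\KK_1] = \mathbb{K}[x]$ has Hilbert series $1/(1-t)$. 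Substituting gives exactly the desired formula.

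There is no real obstacle: the entire argument is a two-line bookkeeping after the associativity observation. If one prefers to avoid invoking Corollary \ref{Hilbert:graph} for three factors, the same identity can be obtained by two successive steps. Corollary \ref{Hccom} gives $H_{\mathbb{K}[G*\overline{\KK}_m]}(t) = H_{\mathbb{K}[G]}(t) + 1/(1-t)^m - 1$; then Corollary \ref{HK1} applied to the apex of the star (which in $G*\mathcal{S}_m$ is adjacent to every other vertex, since it is already adjacent to all leaves in $\mathcal{S}_m$ and to all of $V(G)$ by the product) adds a term $t/(1-t)$. The algebraic identity $-1 + t/(1-t) = -2 + 1/(1-t)$ reconciles the two presentations and produces the stated formula.
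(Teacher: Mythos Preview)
Your proposal is correct and matches the paper's own argument essentially line for line: the paper also proves this by noting $G*\mathcal{S}_m\simeq G*\overline{\KK}_m*\KK_1$ and then invoking either Corollary \ref{Hilbert:graph} directly, or the pair of Corollaries \ref{HK1} and \ref{Hccom}, exactly as you outline in your two alternatives.
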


\providecommand{\bysame}{\leavevmode\hbox
to3em{\hrulefill}\thinspace}

\end{document}